\documentclass[12pt,oneside,reqno]{amsart}

\textwidth=13.5cm

\usepackage[a4paper,left=33mm,top=35mm,right=33mm,bottom=40mm]{geometry}

\usepackage{amssymb,amsfonts}
\usepackage[all,arc]{xy}
\usepackage{enumerate}
\usepackage{siunitx}
\usepackage{enumitem}
\usepackage{mathtools}
\usepackage{mathrsfs}
\usepackage{dsfont}
\usepackage{amsfonts}
\usepackage{amssymb}
\usepackage{graphicx}
\usepackage{ragged2e}
\usepackage{amsthm}
\usepackage{amsmath}
\usepackage[mathscr]{euscript}
 \let\mathscr\relax
\usepackage[scr]{rsfso}
\usepackage{graphicx}
\usepackage{color}
\usepackage{float}
\usepackage{caption}
\usepackage[pdftex,bookmarksnumbered,bookmarksopen]{hyperref}
\hypersetup{hidelinks = true}

\DeclareMathOperator{\vol}{Vol}
\DeclareMathOperator{\Var}{Var}

\newtheorem{thm}{Theorem}[section]
\newtheorem{cor}[thm]{Corollary}
\newtheorem{prop}[thm]{Proposition}
\newtheorem{lem}[thm]{Lemma}

\theoremstyle{definition}

\newtheorem{exmp}[thm]{Example}

\theoremstyle{remark}
\newtheorem{rem}[thm]{Remark}

\numberwithin{equation}{section}

\bibliographystyle{plain}

\begin{document}
	\title[On mass distribution for toral eigenfunctions]{Mass distribution for Toral eigenfunctions via Bourgain's de-randomisation }
	\author{Andrea Sartori}
	\maketitle
	
	\begin{abstract}
We study the mass distribution of Laplacian eigenfunctions at Planck scale for the standard flat torus $\mathbb{T}^2=\mathbb{R}^2/\mathbb{Z}^2$. By averaging over the ball centre, we use Bourgain's de-randomisation to compare the mass distribution of toral eigenfunctions to the mass distribution of random waves in growing balls around the origin.  We then classify all possible limiting distributions and their variances. Moreover, we show that, even in the ``generic'' case, the mass might \textit{not} equidistribute at Planck scale.  Finally, we give  necessary and sufficient  conditions so that the mass of  ``generic" eigenfunctions equidistributes at Planck scale in almost all balls.  
	\end{abstract}

\section{Introduction}
\subsection{Shnirelman's Theorem in shrinking sets}
 Given a compact Riemannian manifold $(M,g)$ without boundary and normalised to have volume $1$, let $\Delta_g$ be the Laplace-Beltrami operator on M. Then, there exists an orthonormal basis for $L^2(M,dvol)$ consisting of eigenfunctions $\{f_{E_i}\}$
 
 \begin{align}
 	\Delta_g f_{E_i}+ E_i f_{E_i}=0  \nonumber
 \end{align}
 with $0=E_1<E_2\leq...$ repeated accordingly to multiplicity, and $E_i\rightarrow \infty$. The celebrated Quantum Ergodicity Theorem \cite{DV,S,Z} asserts that, if the geodesic flow on M is ergodic, then there exists a density one subsequence of eigenfunctions $\{f_{E_j}\}$ of $\Delta_g$  such that 

\begin{align}
\int_{A}|f_{E_j}|^2 \underset{j\rightarrow \infty}{\longrightarrow} Vol(A) \label{QE}
\end{align}
where $A$ is an open subset of $M$. That is, the $L^2$ mass of most eigenfunctions equidistributes on $M$. Berry random waves model \cite{B1,B2} implies a stronger form of this fact: given a parameter $r=r(E)$ such that $r \cdot\sqrt{E}\rightarrow \infty$, we expect, for \textit{generic} eigenfunctions, that 
\begin{align}
	\frac{1}{Vol (B(x,r))}\int_{B(x,r)}|f_{E}|^2\underset{E\rightarrow \infty}{\longrightarrow} 1 \label{SQE}
\end{align}
uniformly for all $x\in M$ (here $ B(x,r)$ denotes the geodesic ball centred at $x$ of radius $r$). Thus, the mass of \textit{generic} eigenfunctions should equidistribute at \textit{Planck scale}.  

We are interested in the case when $M$ is the flat torus $\mathbb{T}^2=\mathbb{R}^2/\mathbb{Z}^2$. Although the geodesic flow on $\mathbb{T}^2$ is completely integrable,  Lester and Rudnick \cite{LS} proved that (\ref{SQE}) holds for a density one subsequence of eigenfunctions at scale $r> E^{-1/2+ o(1)}$. On the other hand, they also proved that there exist eigenfunctions for which  (\ref{SQE}) fails at the point $x=0$ at Planck scale. This naturally raises the question of whether the  failure of  (\ref{SQE}) is only limited to a small set of ball centres. To make this precise, Granville and Wigman \cite{GW} introduced the (pseudo-)random variable

\begin{align}
M_f(x,r):=	\frac{1}{Vol (B(x,r))}\int_{B(x,r)}|f_{E}|^2 \label{defM}
\end{align}
where $x$ is drawn uniformly at random from $\mathbb{T}^2$ and showed, under some additional assumptions on $f$ (see Remark \ref{comparison2} below), that the variance of $M_f(x,r)$ tends to zero at Planck scale. Therefore, (\ref{SQE}) holds for most points $x\in \mathbb{T}^2$. Furthermore, Wigman and Yesha \cite{WY} proved, under flatness assumptions and small variation of the coefficients (see Remark \ref{comparison2} below), that the distribution of $M_f(x,r)$ is asymptotically Gaussian with mean zero and variance $ c \cdot  (\sqrt{E}r)^{-1}$, where the constant $c$ depends on the eigenfunction $f$.  

Bourgain \cite{B1} observed that ``generic" toral eigenfunctions, when averaged over $\mathbb{T}^2$, are comparable to a Gaussian random field. We apply the so called Bourgain's de-randomisation to study $M_f(x,r)$. This allows us to find  its limit distribution and variance for a wider class of eigenfunctions than \cite{GW,WY}. Via the study of the variance, we also show that, even for ``generic" sequences of eigenfunctions, the mass might \textit{not} equidistribute at Planck scale. Moreover, we are able to give sufficient and necessary conditions for mass equidistribution which include and extend some of the results from \cite{GW,WY}.

 \vspace{3mm}\paragraph{\textit{Related results}} When $M$ is the modular surface, Luo and Sarnak \cite{LS} proved that there exists a density one subsequence of eigenfunctions, which are also eigenfunctions of all Hecke operators, such that small scale equidistribution holds for $r\gg E^{-\alpha}$ for some small $\alpha>0$. Young \cite{Y} improved, under the Generalised Riemann Hypothesis, the scale of the said result to $r\gg E^{-1/6+o(1)}$ and to \textit{all} eigenfunctions. Hezari, Rivi\`{e}re \cite{HR}  and independently Han \cite{H} proved that, if $M$ has negative sectional curvature, then small scale equidistribution holds along a density one subsequence of eigenfunctions for $r= \log(E)^{-\alpha}$ for some small $\alpha>0$. Finally, such an approach of averaging over ball centres was also recently used by Humphries \cite{HU} for mass distribution of automorphic forms.

\subsection{Toral eigenfunctions}
\label{random waves}
An eigenfunction for $\Delta$ on $\mathbb{T}^2$ with eigenvalue $E$  can be written as

\begin{align}
	f(x)=\sum_{ |\xi|^2=E}a_{\xi}e(\langle x,\xi \rangle)  \label{function}
\end{align}
where \footnote{ This normalisation implies that the eigenvalue is $4\pi^2 E$.} $e(\cdot)= e(2\pi i \cdot )$   and some complex numbers $\{a_{\xi}\}_{\xi}$.  The multiplicity of the eigenvalue is  the number of representations of $E$ as a sum of two squares and we denote it by $N=N(E)$, we also let $\mathcal{E}=\mathcal{E}(E)=\{\xi \in \mathbb{Z}^2: |\xi|^2=E\}$ so that $|\mathcal{E}|=N$. To assure that $f$ is real-valued, we assume

\begin{align} 
 \overline{a_{\xi}}=a_{-\xi}. \label{simmetry} 
 \end{align}
 Moreover, we normalise $f$ so that

\begin{align}
||f||^2_{L^2(\mathbb{T}^2)}=\sum_{ |\xi|^2=E}|a_{\xi}|^2=1 \label{normalisation} . 
\end{align}
Thanks to (\ref{normalisation}), we can associate to $f$ a probability measure on the unit circle $\mathbb{S}^1= \mathbb{R}/\mathbb{Z}$

\begin{align}
	\mu_{f}=\sum_{ |\xi|^2=E} |a_{\xi}|^2\delta_{\xi/\sqrt{E}} \label{spectral measure}
\end{align} 
  where $\delta_{\xi/\sqrt{E}}$ is the Dirac delta function at the point $\xi/\sqrt{E}$.  The measures $\mu_{f}$ appear naturally in the study of $f$:  Bourgain \cite{BU} and subsequently Buckley and Wigman \cite{BW} proved that ``generic'' eigenfunctions $f$,  when averaged over $x\in \mathbb{T}^2$, approximate a centred stationary Gaussian field with spectral measure  $\mu_{f}$. 
 
 \begin{rem}
 Importantly, the sequence  $\mu_{f}$ does not have a unique weak$^\star$ limit. In fact, the weak limits of $\{\mu_f\}$, in the special case  $|a_{\xi}|^2=1/N$ for all $\xi$, are called ``attainable'' and have been studied in \cite{KW}. 
 \end{rem}
 \subsection{Gaussian fields}
 \label{Gaussian fields}
We  briefly collect some facts about Gaussian fields (on $\mathbb{R}^2$) which will be used later.  A (real-valued) Gaussian field $F$ is a measurable map $F:\Omega \times \mathbb{R}^2\rightarrow \mathbb{R}$ for some probability space  $\Omega$,  such that all finite dimensional distributions of $(F(x_1, \cdot),...F(x_n,\cdot))$ are multivariate Gaussian, where $x_i\in \mathbb{R}^2$ and $n\in \mathbb{N}$. Moreover, $F$ is \textit{centred} if $\mathbb{E}[F]=0$ and \textit{stationary} if its law is invariant under the action $x\rightarrow x+\tau$ for $\tau \in \mathbb{R}^2$.

 By Kolmogorov theorem, every centred Gaussian field is fully determined by its covariance function  

\begin{align}
\mathbb{E}[F(x)\cdot F(y)]= \mathbb{E}[F(x-y)\cdot F(0)] \nonumber
\end{align}
and the stationary property is equivalent to the fact that the covariance function only depends on the difference of $x$ and $y$. Furthermore, the covariance is positive definite so, by Bochner's theorem, it is the Fourier transform of some measure $\mu$ on the plane satisfying $\mu(I)=\mu(-I)$ (as the field is real-valued); that is

\begin{align}
\mathbb{E}[F(x)F(y)]= \int e\left(\langle x-y, \lambda \rangle\right)d\mu(\lambda). \nonumber
\end{align}
The measure $\mu$ is called the \textit{spectral measure} of $F$. Since Gaussian fields are determined by their mean and covariance, $\mu$ fully determines $F$ when $F$ is centred, and we may write $F=F_{\mu}$. From this point on, it will be tacitly assumed that  all random fields are defined on a common probability space $(\Omega,\mathcal{F},\mathbb{P})$ with expectation $\mathbb{E}$.  
 
 \subsection{Statement of main results}
 \label{Statement}
We make the following two assumptions, which we will discuss in Section \ref{passage to random fields}:
\begin{itemize}
	\setlength\itemsep{2mm}
	\item A1 (Spectral correlations). Let $0<\gamma< 1/2$, $E$ be an integer representable as the sum of two squares and   $B=B(E)$ be an arbitrarily slow growing function of $E$ taking integer values.  Then, we say that $E$ satisfies assumption $A1$ if for every  $2\leq 2l\leq B$ the number of $2l$-tuples $(\xi_1,...,\xi_{2l})\in \mathcal{E}(E)^{2l}$ satisfying
	 \begin{align}
	 	\xi_1+ \xi_2+...+\xi_{2l}=0 \label{spectral}
	 \end{align}
	 is 
	 \begin{align}
	 	\frac{(2l!)}{2^l \cdot l!}N^l + O(N^{\gamma l}). \nonumber
	 \end{align}
	 where the constant implied in the notation is absolute. 

	 \item A2 (Flatness). Fix some function $u: \mathbb{R}\rightarrow \mathbb{R}$ such that for any $\epsilon>0$ $u(N)/N^{\epsilon} \rightarrow 0$ as $N\rightarrow \infty$. A function $f$ of the form (\ref{function}), normalised as in (\ref{normalisation}),  satisfies assumption A2 if for any $\xi\in \mathcal{E}$ 
	 
	 \begin{align}
	 	|a_{\xi}|^2 \leq \frac{u(N)}{N}. \label{4}
	 \end{align}
\end{itemize}   
\begin{rem}
	By \cite[Theorem 17]{BB} (see also \cite[Lemma 4]{BU}) assumption A1 is satisfied for a density one subsequence of energy levels.  Moreover, assumption A1, for \textit{every} eigenvalue $E$, would follow from a sub-exponential bound in the deep work of  Evertse-Schlickewei-Schmidt on additive relations in multiplicative subgroups of $\mathbb{C}^{\star}$ of bounded rank \cite[Remark 1]{BU}. If we regard the set $(a_{\xi})_{\xi}$ as points on an $N$-dimensional (complex) sphere, by L\'{e}vy concentration of measure \cite[Theorem 2.3]{LE}, assumption A2 (with $u(N)=(\log N)^{O(1)}$, say) is satisfied with probability asymptotic to $1$. Thus, ``generic''  eigenfunctions on $\mathbb{T}^2$ satisfy both conditions.
\end{rem} 
We will then prove the following: 
\begin{thm}
	\label{main theorem}
	Let $\epsilon>0$ and $t>0$, $R>1$ be fixed, $M_f(x,r)$ be as in (\ref{defM}), $\mu_{f}$ as in (\ref{spectral measure}) and $F_{\mu_{f}}$ as in Section \ref{Gaussian fields}. Then there exists some $E_0=E_0(\epsilon,t,R)$ such that for every $E> E_0$ satisfying A1 and $N\rightarrow \infty$ we have 
	
	\begin{align}
		\left| \vol (x\in \mathbb{T}^2: M_f(x,r)\leq t)- \mathbb{P}\left( \frac{1}{\pi R^2}\int_{B(R)}|F_{\mu_{f}}|^2\leq t\right)\right|\leq \epsilon \nonumber
	\end{align}
	where $r=R/\sqrt{E}$,  uniformly for all $f$ satisfying A2. 
\end{thm}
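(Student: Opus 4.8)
The plan is to realise $M_f(x,r)$ as a fixed continuous functional applied to a rescaled and translated copy of $f$, and then invoke Bourgain's de-randomisation to replace that random (in $x$) family of fields by the Gaussian field $F_{\mu_f}$. Concretely, for $x\in\mathbb{T}^2$ set $g_x(y):=f(x+y/\sqrt E)$; then a change of variables gives
\begin{align}
M_f(x,r)=\frac{1}{\pi R^2}\int_{B(R)}|g_x(y)|^2\,dy=:\Phi(g_x|_{B(R)}),\nonumber
\end{align}
where $\Phi(h)=\frac{1}{\pi R^2}\int_{B(R)}|h|^2$ is a bounded, Lipschitz (hence continuous) functional on $C(B(R))$ with respect to the sup norm. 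Thus the left-hand distribution is exactly the law of $\Phi(g_x|_{B(R)})$ when $x$ is uniform on $\mathbb{T}^2$, and the right-hand side is $\mathbb{P}(\Phi(F_{\mu_f}|_{B(R)})\le t)$. It therefore suffices to show that the push-forward of the uniform measure on $\mathbb{T}^2$ under $x\mapsto g_x|_{B(R)}\in C(B(R))$ converges (as $N\to\infty$ along $E$ satisfying A1, uniformly over $f$ satisfying A2) in distribution to the law of $F_{\mu_f}|_{B(R)}$, and then compose with $\Phi$.

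The heart of the matter is this weak-convergence statement, which is Bourgain's de-randomisation in the form developed in \cite{BU,BW}: under A1 (controlling the spectral correlations, i.e.\ the number of vanishing $2l$-sums up to the slowly growing cutoff $B$) and A2 (flatness of the coefficients, forcing no single frequency to dominate), the rescaled eigenfunction $g_x$, with $x$ random, approximates the stationary Gaussian field with spectral measure $\mu_f$ on compact sets. I would carry this out by the method of moments on a net: fix finitely many points $y_1,\dots,y_m\in B(R)$ and a slow cutoff, and compute
\begin{align}
\frac{1}{\vol(\mathbb{T}^2)}\int_{\mathbb{T}^2}\prod_{j=1}^{m} g_x(y_j)^{k_j}\,dx
=\sum a_{\xi_1}\cdots a_{\xi_K}\, e\!\big(\langle\,\cdot\,\rangle\big)\cdot\mathbf{1}[\xi_1+\cdots+\xi_K=0],\nonumber
\end{align}
the diagonal terms of which reproduce, thanks to the Wick/Isserlis formula, exactly the Gaussian moments $\mathbb{E}\big[\prod_j F_{\mu_f}(y_j)^{k_j}\big]$ (the $\frac{(2l)!}{2^l l!}N^l$ count in A1 is precisely the number of perfect matchings times $N^l$), while the off-diagonal contribution is $O(N^{\gamma l-l})=o(1)$ by A1. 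Controlling all moments up to the growing order $B$ forces convergence of the finite-dimensional distributions to Gaussian; tightness in $C(B(R))$ comes from a standard equicontinuity estimate (e.g.\ a Sobolev/Kolmogorov bound on $\nabla g_x$, again via a second-moment computation using $|\xi|=\sqrt E$ so $|\nabla g_x|$ has the same $L^2$ normalisation). The uniformity in $f$ is automatic because all the error terms depend on $f$ only through $u(N)$ and the A1 constant, not on the individual $a_\xi$.

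Given the weak convergence $g_x|_{B(R)}\Rightarrow F_{\mu_f}|_{B(R)}$ in $C(B(R))$, the final step is routine: $\Phi$ is continuous and bounded, so $\Phi(g_x)\Rightarrow\Phi(F_{\mu_f})$, i.e.\ the distribution functions converge at every continuity point $t$; one then upgrades pointwise convergence of distribution functions to the uniform-in-the-statement $\epsilon$-bound by noting that the limiting law of $\Phi(F_{\mu_f})$ has no atoms (the Gaussian field $F_{\mu_f}$ is non-degenerate since $\mu_f$ is a probability measure, so $\int_{B(R)}|F_{\mu_f}|^2$ has a continuous distribution), which makes the convergence of distribution functions uniform by Pólya's theorem, and extracting the quantitative $E_0(\epsilon,t,R)$. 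The main obstacle is the de-randomisation step itself — in particular making the moment comparison valid up to the \emph{slowly growing} order $B$ rather than a fixed order, which is what is needed to conclude convergence in distribution rather than mere convergence of finitely many moments, and keeping every estimate there uniform over all $f$ obeying A2. The rest (the change of variables, continuity of $\Phi$, tightness, no atoms) is soft.
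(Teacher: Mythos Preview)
Your overall plan matches the paper's: write $M_f(x,r)=\Phi(g_x)$ for the quadratic functional $\Phi$ and compare $g_x$ to $F_{\mu_f}$ via de-randomisation; the moment computation you sketch (diagonal terms give the Wick pairings, off-diagonal terms are killed by A1 and A2) is also the right one. But there is a real gap in packaging this as ``$g_x|_{B(R)}\Rightarrow F_{\mu_f}|_{B(R)}$ in $C(B(R))$'' and then invoking the continuous mapping theorem and P\'olya. The target $F_{\mu_f}$ \emph{depends on $E$} through $\mu_f$, so you are not converging to a fixed limit along $E\to\infty$: weak convergence, tightness, the continuous mapping theorem and P\'olya are all statements about convergence to a single limiting law, and none applies verbatim to a moving target. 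What the theorem asserts is that two $E$-dependent laws are $\epsilon$-close for large $E$, uniformly in $f$; your moment argument shows their mixed moments agree up to $o(1)$, but passing from that to closeness of distribution functions requires either a quantitative moment-to-distribution bound uniform over the whole family $\{\mu_f\}$, or a subsequence argument via compactness of $\{\mu_f\}$, neither of which you have supplied. Your P\'olya step has the same defect: you would need the law of $\int_{B(R)}|F_{\mu_f}|^2$ to have no atom at $t$ \emph{uniformly in $\mu_f$}.

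The paper supplies the missing mechanism by a \emph{coupling} rather than weak convergence. Proposition~\ref{main prop} produces a measure-preserving map $\tau:\Omega'\to\mathbb{T}^2$ (with $\mathbb{P}(\Omega')>1-\epsilon_2$ and $\vol(\mathbb{T}^2\setminus\tau(\Omega'))\le\epsilon_2$) on which $\sup_{y}|F_{\tau(\omega)}(y)-F^R_{\mu_f}(y,\omega)|\le\epsilon_1$. Because $g_{\tau(\omega)}$ and $F^R_{\mu_f}$ now live on the \emph{same} probability space and are sup-norm close, one gets $|\Phi(g_{\tau(\omega)})-\Phi(F^R_{\mu_f})|$ small directly on the good set, and the moving target causes no difficulty. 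A further truncation via the Borell--TIS inequality (Lemma~\ref{not too big}) is needed here because $\Phi$ is quadratic and hence \emph{not} bounded on $C(B(R))$, contrary to what you wrote; local Lipschitz control of $\Phi$ only kicks in once $\sup|F^R_{\mu_f}|\le M$. The paper then closes by comparing characteristic functions rather than CDFs.
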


Upon  passing to a subsequence,  we can assume that $\mu_{f}$ weak$^{\star}$ converges to  $\mu$ ($\mu_{f}\Rightarrow \mu$), where $\mu$ is some probability measure on $\mathbb{S}^1$. We are then able to find the distribution of $(1/\pi R^2)\int_{B(R)}|F_{\mu}|^2$, see Section \ref{distribution randon waves}. This leads to the next result which requires some extra notation.  By Lebesgue decomposition theorem \cite[Theorem 19.61]{HR}, we can write  

\begin{align}
\mu =\alpha\mu_{A} + \beta \mu_B  \label{decomp}
\end{align} 
where $\alpha,\beta \in \mathbb{R}$, $\mu_A$ is a purely atomic probability measure and $\mu_{B}$ is a probability measure with no atoms. We then write $\mu_A = \sum_{\xi \in \text{spt}(\mu_A)} \sigma_{\xi}\delta_{\xi}$ 
for some ${\sigma_{\xi}}>0$ with $\sum_{\xi} \sigma_{\xi}=1$ and define

 \begin{align}
 W(\mu):= \sum_{ \xi \in \text{spt}(\mu_A)} |X_{\xi}|^2 \label{W}
 \end{align} where $ X_{\xi}$ are i.i.d. $N(0,\sigma_{\xi})$ complex random variables satisfying $\overline{X_{\xi}}= X_{-\xi}$. 

\begin{thm}
	\label{main corollary}
Let $\mu$ be a probability measure on $\mathbb{S}^1$, write $\mu= \alpha \mu_{A} + \beta \mu_B$ as in (\ref{decomp}) and let $t>0$. Then, uniform for all $f$ satisfying $A2$ such that $\mu_{f} \Rightarrow \mu$, we have  
	\begin{align}
\lim\limits_{R\rightarrow \infty}\lim\limits_{E \rightarrow \infty} \vol(x\in \mathbb{T}^2: M_f(x,R/\sqrt{E})\leq t) = \mathbb{P}\left( \alpha \cdot W(\mu_A) + \beta \leq t \right) \nonumber
\end{align}
where the limit $E\rightarrow \infty$ is taken over a sequence of eigenvalues satisfying $A1$. Moreover,
\begin{align}
\lim\limits_{R\rightarrow \infty}\lim\limits_{E \rightarrow \infty}\int_{\mathbb{T}^2}( M_f(x,R/\sqrt{E})-1)^2 = \alpha^2 \Var(W(\mu_A)). \nonumber
\end{align}
\end{thm}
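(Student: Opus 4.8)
The plan is to obtain the distributional statement from Theorem \ref{main theorem} combined with a weak-$\star$ continuity argument and the behaviour of random waves in growing balls, and to obtain the variance statement by a direct Fourier‑analytic computation that uses assumptions A1 and A2. Throughout, write $\Psi_R(\nu):=\frac{1}{\pi R^2}\int_{B(R)}|F_\nu|^2$ for a probability measure $\nu$ on $\mathbb{S}^1$, and recall $\mathbb{E}[\Psi_R(\nu)]=1$.

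For the distributional claim, fix $R$ and a continuity point $t$ of the relevant limiting laws. By Theorem \ref{main theorem}, $\vol(x\in\mathbb{T}^2:M_f(x,R/\sqrt E)\le t)$ and $\mathbb{P}(\Psi_R(\mu_f)\le t)$ differ by $o(1)$ as $E\to\infty$ along eigenvalues satisfying A1, uniformly over $f$ satisfying A2. Since $\mu_f\Rightarrow\mu$, the covariance kernels $z\mapsto\int e(\langle z,\lambda\rangle)\,d\mu_f(\lambda)$ converge uniformly on compacts to that of $F_\mu$, so the laws of $F_{\mu_f}$ restricted to $B(R)$ converge and hence $\Psi_R(\mu_f)\to\Psi_R(\mu)$ in distribution; thus $\lim_{E\to\infty}\vol(\dots)=\mathbb{P}(\Psi_R(\mu)\le t)$. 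It remains to let $R\to\infty$. Using (\ref{decomp}) we may realise $F_\mu\overset{d}{=}\sqrt{\alpha}\,F_{\mu_A}+\sqrt{\beta}\,F_{\mu_B}$ with independent summands, so $\Psi_R(\mu)=\alpha\Psi_R(\mu_A)+\beta\Psi_R(\mu_B)+2\sqrt{\alpha\beta}\,\frac{1}{\pi R^2}\int_{B(R)}F_{\mu_A}F_{\mu_B}$. Writing $F_{\mu_A}(x)=\sum_{\xi\in\mathrm{spt}(\mu_A)}X_\xi e(\langle x,\xi\rangle)$, only the diagonal terms $\xi'=-\xi$ of $\frac{1}{\pi R^2}\int_{B(R)}F_{\mu_A}^2$ survive as $R\to\infty$, summing to $W(\mu_A)$; an Isserlis/Wick computation gives $\mathbb{E}\big|\Psi_R(\mu_A)-W(\mu_A)\big|^2=\frac{2}{\pi^2R^4}\sum_{\xi+\xi'\ne 0}\sigma_\xi\sigma_{\xi'}|\widehat{\mathbf{1}_{B(R)}}(\xi+\xi')|^2\to 0$ by dominated convergence. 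Next, $\Var(\Psi_R(\mu_B))\le\frac{2}{\pi R^2}\int_{|z|\le 2R}|\widehat{\mu_B}(z)|^2\,dz\to 0$ by Wiener's theorem, since $\mu_B$ has no atoms, so $\Psi_R(\mu_B)\to 1$ in $L^2(\Omega)$; the cross term is $O\big(\Var(\Psi_R(\mu_A))^{1/2}\Var(\Psi_R(\mu_B))^{1/2}\big)\to 0$. Hence $\Psi_R(\mu)\to\alpha W(\mu_A)+\beta$ in $L^2(\Omega)$, hence in distribution, and taking $\lim_R\lim_E$ yields the first assertion at continuity points of the limit law.

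For the variance, note first that $\int_{\mathbb{T}^2}M_f(x,r)\,dx=1$ exactly (Fubini and $\|f\|_{L^2}=1$), so with $h=|f|^2-1$ and $K_r=(\pi r^2)^{-1}\mathbf{1}_{B(0,r)}$ one has $M_f(\cdot,r)-1=h\ast K_r$ and, by Parseval, $\int_{\mathbb{T}^2}(M_f(x,r)-1)^2\,dx=\sum_{\eta\ne 0}|\widehat{|f|^2}(\eta)|^2|\widehat{K_r}(\eta)|^2$ with $\widehat{|f|^2}(\eta)=\sum_{\xi_1-\xi_2=\eta}a_{\xi_1}\overline{a_{\xi_2}}$ and $|\widehat{K_r}(\eta)|^2=\pi^{-2}g(r\eta)$, $g:=|\widehat{\mathbf{1}_{B(1)}}|^2$, which is continuous and bounded with $g(0)=\pi^2$ and $g(v)\to 0$ as $|v|\to\infty$. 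Expanding $|\widehat{|f|^2}(\eta)|^2$ over $4$‑tuples with $\xi_1-\xi_2=\xi_3-\xi_4\ne 0$, the two ``diagonal'' families $\{\xi_3=\xi_1\}$ and $\{\xi_3=-\xi_2\}$ each contribute $\sum_{\xi\ne\xi'}|a_\xi|^2|a_{\xi'}|^2|\widehat{K_r}(\xi-\xi')|^2$ (using $\overline{a_\xi}=a_{-\xi}$); their overlap is $O(\sum_\xi|a_\xi|^4)=O(u(N)/N)$ by A2; and by A1 with $2l=4$ the number of solutions of $\xi_1-\xi_2-\xi_3+\xi_4=0$ in $\mathcal{E}^4$ is $\frac{4!}{2^2 2!}N^2+O(N^{2\gamma})=3N^2+O(N^{2\gamma})$, of which $3N^2-3N$ are degenerate, leaving $O(N)$ genuine spectral quasi‑correlations, each of size $O((u(N)/N)^2)$ by A2, for a total of $O(u(N)^2/N)=o(1)$. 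Therefore $\int_{\mathbb{T}^2}(M_f-1)^2=\frac{2}{\pi^2}\sum_{\xi\ne\xi'}|a_\xi|^2|a_{\xi'}|^2\,g\!\big(R(\xi-\xi')/\sqrt E\big)+o(1)=\frac{2}{\pi^2}\Big(\iint_{(\mathbb{S}^1)^2}g(R(\lambda-\lambda'))\,d\mu_f(\lambda)\,d\mu_f(\lambda')-g(0)\!\sum_\xi|a_\xi|^4\Big)+o(1)$. Since $\sum_\xi|a_\xi|^4\to 0$ and $\mu_f\times\mu_f\Rightarrow\mu\times\mu$, letting $E\to\infty$ gives $\frac{2}{\pi^2}\iint g(R(\lambda-\lambda'))\,d\mu\,d\mu$; letting $R\to\infty$, $g(R\,\cdot\,)\to\pi^2\mathbf{1}_{\{0\}}$ pointwise (bounded by $\pi^2$), so by dominated convergence this tends to $2\,(\mu\times\mu)(\{\lambda=\lambda'\})=2\alpha^2\sum_{\xi\in\mathrm{spt}(\mu_A)}\sigma_\xi^2$. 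Finally, from (\ref{W}), $|X_{-\xi}|^2=|X_\xi|^2$, $\mathbb{E}|X_\xi|^2=\sigma_\xi$ and $\Var(|X_\xi|^2)=\sigma_\xi^2$ give $\mathbb{E}[W(\mu_A)]=1$ and $\Var(W(\mu_A))=2\sum_\xi\sigma_\xi^2$, so $\lim_R\lim_E\int_{\mathbb{T}^2}(M_f-1)^2=\alpha^2\Var(W(\mu_A))$ (consistently, $\Var(\alpha W(\mu_A)+\beta)=\alpha^2\Var(W(\mu_A))$).

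The main obstacle is the control of the spectral quasi‑correlations in the variance computation: this is exactly the point at which assumption A1 with $2l=4$ is indispensable, in tandem with the flatness A2, which both annihilates $\sum_\xi|a_\xi|^4$ and supplies the $(u(N)/N)^2$ saving on each quasi‑correlation — without such input one obtains only an error of size $N^{\gamma}$ rather than $o(1)$. A secondary technical point is the passage from the uniform‑in‑$\epsilon$ statement of Theorem \ref{main theorem} to a genuine limit in distribution and the interchange of the $E\to\infty$ and $R\to\infty$ limits, which must be performed in the stated order; this is routine once one checks that $t$ is a continuity point of each intermediate law $\Psi_R(\mu)$ and of the law of $\alpha W(\mu_A)+\beta$ (which has a density when $\alpha>0$, the case $\alpha=0$ being the trivial $\mathbb{P}(1\le t)$). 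One could alternatively derive the variance from Theorem \ref{main theorem} by upgrading the distributional convergence with a uniform $L^{2+\delta}$ bound on $M_f(\cdot,r)$, but proving that bound again relies on A1 and A2, so the direct route above is more economical.
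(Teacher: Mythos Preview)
Your distributional argument follows the same two–step plan as the paper (their Propositions 2.1 and 2.4): first Theorem \ref{main theorem} plus weak-$\star$ continuity to reach $\Psi_R(\mu)$, then the decomposition $F_\mu=\sqrt{\alpha}F_{\mu_A}+\sqrt{\beta}F_{\mu_B}$ and the $R\to\infty$ analysis. The only cosmetic differences are that the paper realises the passage $F_{\mu_f}\to F_\mu$ via a coupling lemma (their Lemma 2.3) rather than convergence of covariance kernels, and handles $\Psi_R(\mu_B)\to 1$ through the Fomin--Grenander--Maruyama ergodic theorem rather than Wiener's theorem; your choices are arguably more elementary.

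The variance part, however, is a genuinely different route. The paper does \emph{not} compute $\int_{\mathbb{T}^2}(M_f-1)^2$ directly: instead it proves a uniform third-moment bound (their Lemma 2.9, using A1 with $2l=6$) to get uniform integrability of $M_f(\cdot,r)^2$, and then reads off the variance limit from the already-established distributional convergence $M_f\overset{d}{\to}\alpha W(\mu_A)+\beta$. Your Parseval computation is more direct, requires A1 only with $2l=4$, and has the merit of producing the finite-$R$ value $\frac{2}{\pi^2}\iint g(R(\lambda-\lambda'))\,d\mu(\lambda)\,d\mu(\lambda')$ explicitly, so that the final answer $2\alpha^2\sum_\xi\sigma_\xi^2=\alpha^2\Var(W(\mu_A))$ emerges as the atom-diagonal mass of $\mu\times\mu$. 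The paper's route is shorter once the third-moment lemma is available and yields uniform integrability as a by-product; yours avoids the $L^3$ detour but gives less. One small point: your off-diagonal count ``$O(N)$'' is correct but comes partly from the $O(N)$ discrepancy between $3N^2$ and the exact diagonal count $3N^2-3N$, not purely from the $O(N^{2\gamma})$ in A1; the conclusion $(u(N)/N)^2\cdot O(N)=o(1)$ is unaffected.
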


 Taking $\alpha=0$ and thus $\beta=1$ in Theorem \ref{main corollary}, we have: 
\begin{cor}
	\label{noatoms}
	Under the assumptions of Theorem \ref{main corollary}, as $E\rightarrow \infty$ and  $R\rightarrow \infty$, we have 
	
	\begin{align}
	 	\int_{\mathbb{T}^2}( M_f(x,r)-1)^2 \rightarrow 0 \nonumber
	\end{align}
if and only if $\mu$ has no atoms. 
\end{cor}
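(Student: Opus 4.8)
The plan is to read the corollary off the second half of Theorem \ref{main corollary}, which evaluates
\begin{align}
\lim_{R\to\infty}\lim_{E\to\infty}\int_{\mathbb{T}^2}\bigl(M_f(x,R/\sqrt{E})-1\bigr)^2 = \alpha^2\,\Var(W(\mu_A)), \nonumber
\end{align}
with $\mu=\alpha\mu_A+\beta\mu_B$ the Lebesgue decomposition (\ref{decomp}). Hence the left-hand side vanishes if and only if $\alpha^2\,\Var(W(\mu_A))=0$, and the whole statement reduces to the elementary fact that this product is zero precisely when $\mu$ has no atoms. The ``if'' direction is then immediate: if $\mu$ has no atoms then $\alpha=0$, $\beta=1$, the atomic part is absent, and the integral tends to $0$ by Theorem \ref{main corollary}.

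For the converse, assume $\mu$ has at least one atom, so $\alpha>0$ and $\mu_A=\sum_{\xi\in\mathrm{spt}(\mu_A)}\sigma_\xi\delta_\xi$ is a probability measure with every $\sigma_\xi>0$; it suffices to show $\Var(W(\mu_A))>0$. Recall $W(\mu_A)=\sum_\xi|X_\xi|^2$ with $X_\xi$ i.i.d.\ complex $N(0,\sigma_\xi)$ subject to $\overline{X_\xi}=X_{-\xi}$. Since $\mu$, and hence $\mu_A$, is symmetric under $\xi\mapsto-\xi$, I would group the support into orbits of this involution. Inside an orbit one has $|X_\xi|^2=|X_{-\xi}|^2$, while variables attached to distinct orbits are independent, so $\Var(W(\mu_A))$ is the sum over orbits of the variances of the associated blocks: $2|X_\xi|^2$ for a two-point orbit and $X_\xi^2$ for a fixed point. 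In both cases the block is a non-degenerate random variable built from a Gaussian of positive variance $\sigma_\xi$, hence has strictly positive variance; moreover $\Var(|X_\xi|^2)\lesssim\sigma_\xi^2\le\sigma_\xi$, so together with $\sum_\xi\sigma_\xi=1$ the series defining $W(\mu_A)$ and the series for its variance both converge. Therefore $\Var(W(\mu_A))>0$, so $\alpha^2\,\Var(W(\mu_A))>0$ and the limit is nonzero.

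There is essentially no obstacle once Theorem \ref{main corollary} is available: the remaining content is the strict positivity of a variance, which holds because each $|X_\xi|^2$ is genuinely non-constant. The only points requiring a little care are the bookkeeping forced by the reality relation $\overline{X_\xi}=X_{-\xi}$ — so that the blocks above are counted once per orbit of $\xi\mapsto-\xi$ rather than once per atom — and, when $\mathrm{spt}(\mu_A)$ is infinite, the convergence of the sums defining $W(\mu_A)$ and $\Var(W(\mu_A))$, both of which are guaranteed by $\sum_\xi\sigma_\xi=1$.
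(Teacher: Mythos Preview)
Your proposal is correct and follows the same route as the paper: the corollary is read off the variance identity in Theorem \ref{main corollary}. The paper itself only spells out the ``if'' direction (the sentence ``Taking $\alpha=0$ and thus $\beta=1$'' preceding the statement), leaving the converse implicit; you supply that converse by checking $\Var(W(\mu_A))>0$ whenever $\mu_A$ is nontrivial, which is a reasonable elaboration. One harmless over-caution: since $\mu$ is supported on $\mathbb{S}^1$, the involution $\xi\mapsto-\xi$ has no fixed points, so your ``fixed point'' case is vacuous.
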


\begin{rem}
	\label{comparison2}
	In \cite{GW,WY} the authors give sufficient conditions for the vanishing of the variance of $M_f(x,R/\sqrt{E})$. These are essentially flatness of $f$ and the lack of mass concentration for $\mu_{f}$ (see  \cite[equation (19)]{GW} and \cite[Definition $2.4$]{WY}). In hindsight, Corollary \ref{noatoms} explains  such conditions and shows that they are sharp: they imply that $\mu$ is non-atomic. 
\end{rem}

\begin{exmp}
	\label{atoms}
	Let us consider ``Bourgain's eigenfunctions": 
	
	\begin{align}
	f(x)=	\frac{1}{\sqrt{N}}\sum_{ |\xi|^2=E} e(\langle \xi, x\rangle). \nonumber
	\end{align}
 Such eigenfunctions satisfy $A2$, so for the rest of this example we assume that the eigenvalues satisfy $A1$ and $N\rightarrow \infty$.  Cilleruelo \cite{C} proved that there exists a sequence of eigenvalues $E$ such that $N\rightarrow \infty$  and $\mu_f\Rightarrow\mu:=\frac{1}{4}(\delta_{(1,0)} + $ $ \delta_{(0,1)}+ \delta_{(-1,0)}  + \delta_{(0,-1)})$. Moreover, using the arguments in \cite{BMW}, one can show that $\mu$ can be attained even after restricting to eigenvalues satisfying $A1$. Thus, for Bourgain's eigenfunctions such that $\mu_{f} \Rightarrow \mu$, Theorem \ref{main corollary} asserts that

\begin{align}
	& M_f(x,r) \overset{d}{\longrightarrow}  \frac{\chi^2(2)}{2} &\int_{\mathbb{T}^2}( M_f(x,r)-1)^2 \rightarrow 1 \nonumber
\end{align}
as  $E\rightarrow \infty$ and $R\rightarrow \infty$, where $\chi^2(2)$ is the Chi-squared distribution with $2$ degrees of freedom and the convergence is in distribution. Hence, mass equidistribution does not hold in this case. 
\end{exmp}

\subsection{Notation}
Let $x \rightarrow \infty$ be some parameter, we say that the quantity $X=X(x)$ and $Y=Y(x)$   satisfy $X\ll Y$ , $X\gg Y$ if there exists some constant $C$, independent of $x$, such that $X\leq C Y$ and $X\geq CY$ respectively. We also write $O(X)$ for some quantity bounded in absolute value by a constant times $X$ and $X=o(Y)$ if $X/Y\rightarrow 0$ as $x\rightarrow \infty$, in particular we denote by $o(1)$ any function that tends to $0$ as $x\rightarrow \infty$. We denote by $\Rightarrow$ the weak$^\star$ convergence of probability measures, by $\overset{d}{\longrightarrow}$ convergence in distribution and by $B(R)$ the two dimensional ball centred at $0$ of radius $R>0$. Moreover, $\mu$ will always denote a probability measure supported on $\mathbb{S}^1$ (the unit circle) and in general $\mu_{f} \Rightarrow \mu$. Finally, we recall that $\Omega$ is the abstract probability space where all random objects are defined.

\section{Proof of Theorem \ref{main corollary}}
In this section  we prove Theorem \ref{main corollary}, assuming Theorem \ref{main theorem}. This will be done in two steps: first we take the limit as $E\rightarrow \infty$ and show that the limiting distribution of $M_f(x,R/\sqrt{E})$ tends to the random variable  $\frac{1}{\pi R^2}\int_{B(R)}|F_{\mu}|^2$. Secondly, we determine such random variable in the limit $R\rightarrow \infty$. 
\subsection{Limit as $E\rightarrow \infty$}
The aim of this section is to prove the following: 
\begin{prop}
	\label{conv dis}
	Under the assumptions of Theorem \ref{main corollary}, and $\mu_{f}\Rightarrow \mu$. Then, as $E\rightarrow \infty$, we have 
	\begin{align}
	M_f(x,r) \overset{d}{\longrightarrow}\frac{1}{\pi R^2}\int_{B(R)}|F_{\mu}|^2 dx. \nonumber
	\end{align}
\end{prop}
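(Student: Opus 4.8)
The plan is to combine Theorem~\ref{main theorem} with a soft continuity statement: the law of $\frac{1}{\pi R^2}\int_{B(R)}|F_\nu|^2$ depends continuously on the probability measure $\nu$ on $\mathbb{S}^1$ in the weak$^\star$ topology. Granting this, fix $R>1$ and let $t>0$ be a continuity point of the distribution function of $Y:=\frac{1}{\pi R^2}\int_{B(R)}|F_\mu|^2$. Applying Theorem~\ref{main theorem} with this $t$, this $R$, and an arbitrary $\epsilon>0$ — and using that the estimate there is uniform over all $f$ satisfying A2, hence holds for the given sequence — we get
\[
\vol\bigl(x\in\mathbb{T}^2:\ M_f(x,r)\le t\bigr)=\mathbb{P}\Bigl(\tfrac{1}{\pi R^2}\int_{B(R)}|F_{\mu_f}|^2\le t\Bigr)+o(1)
\]
as $E\to\infty$ along the given sequence of eigenvalues (which satisfy A1 and $N\to\infty$). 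By the continuity statement together with $\mu_f\Rightarrow\mu$, the right-hand side tends to $\mathbb{P}(Y\le t)$. Since $t$ was an arbitrary continuity point of the law of $Y$, this says precisely that $M_f(x,r)\overset{d}{\longrightarrow}Y$, which is the claim.

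It remains to establish the continuity of $\nu\mapsto\mathrm{Law}\bigl(\frac{1}{\pi R^2}\int_{B(R)}|F_\nu|^2\bigr)$, and for this I would use the method of moments. Recall that $F_\nu$ has covariance kernel $K_\nu(x-y)=\int e(\langle x-y,\lambda\rangle)\,d\nu(\lambda)$, which satisfies $|K_\nu|\le\nu(\mathbb{S}^1)=1$ and $K_\nu(0)=1$; since $\nu$ is compactly supported, $F_\nu$ has a.s.\ continuous (indeed smooth) sample paths, so $\int_{B(R)}|F_\nu|^2$ is a well-defined nonnegative random variable. For $k\ge1$, Fubini together with Wick's (Isserlis') formula gives
\[
\mathbb{E}\Bigl[\Bigl(\int_{B(R)}|F_\nu|^2\,dx\Bigr)^{k}\Bigr]=\int_{B(R)^{k}}\ \sum_{P}\ \prod_{\{a,b\}\in P}K_\nu(y_a-y_b)\ dx_1\cdots dx_k,
\]
where $(y_1,\dots,y_{2k})=(x_1,x_1,x_2,x_2,\dots,x_k,x_k)$ and $P$ runs over the $(2k-1)!!$ perfect matchings of $\{1,\dots,2k\}$. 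Each product is bounded by $1$ uniformly in $\nu$ and the $x_i$, so the integrand is dominated by the constant $(2k-1)!!$; and for fixed $x_1,\dots,x_k$ it converges as $\mu_f\Rightarrow\mu$, since $\lambda\mapsto e(\langle y_a-y_b,\lambda\rangle)$ is bounded and continuous on $\mathbb{S}^1$, whence $K_{\mu_f}(y_a-y_b)\to K_\mu(y_a-y_b)$. By dominated convergence every moment of $\frac{1}{\pi R^2}\int_{B(R)}|F_{\mu_f}|^2$ converges to the corresponding moment of $Y$. Finally, $\int_{B(R)}|F_\mu|^2$ is a quadratic functional of a Gaussian field on a bounded domain, so (writing it in a Karhunen--Lo\`eve basis as $\sum_n\lambda_n Z_n^2$ with $Z_n$ i.i.d.\ standard Gaussian and $\sum_n\lambda_n=\pi R^2<\infty$) it has a moment generating function finite near $0$; hence its law is determined by its moments, and convergence of moments upgrades to $\frac{1}{\pi R^2}\int_{B(R)}|F_{\mu_f}|^2\overset{d}{\longrightarrow}Y$, completing the continuity step.

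The main obstacle is precisely this last point: both $M_f(x,r)$ and its limit are governed not by finitely many values of a field but by an $L^2$-mass over the whole ball, so one must control an infinite-dimensional functional. The moment computation above sidesteps any tightness or equicontinuity argument; the only analytic input is the uniform bound $|K_\nu|\le1$, which simultaneously makes dominated convergence applicable and forces the limiting distribution to be moment-determinate. (Alternatively one could approximate $\int_{B(R)}|F_\nu|^2$ by Riemann sums, invoke convergence of finite-dimensional Gaussian laws — immediate from $K_{\mu_f}\to K_\mu$ — and bound the discretisation error uniformly in $\nu$ using the smoothness of these band-limited fields; the moment route seems shorter.)
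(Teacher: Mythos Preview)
Your argument is correct. The overall architecture matches the paper's: first invoke Theorem~\ref{main theorem} to replace $M_f(x,r)$ by $X(\mu_f):=\frac{1}{\pi R^2}\int_{B(R)}|F_{\mu_f}|^2$, then show $X(\mu_f)\overset{d}{\to}X(\mu)$ from $\mu_f\Rightarrow\mu$. The genuine difference is in how you establish this second step. The paper proceeds by a \emph{coupling}: it quotes a lemma of Sodin (Lemma~\ref{Sodin} here) asserting that one may realise $F_{\mu_f}$ and $F_\mu$ on the same probability space so that $\sup_{B(R)}|F_{\mu_f}-F_\mu|$ is small outside a small event, and combines this with the Borell--TIS inequality (Lemma~\ref{not too big}) to control $\sup_{B(R)}|F_\mu|$; together these force $|X(\mu_f)-X(\mu)|$ to be small with high probability, hence the characteristic functions are close. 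You instead run the method of moments, using Isserlis' formula and the uniform bound $|K_\nu|\le 1$ to get dominated convergence of each moment, then moment-determinacy of the limit via its Karhunen--Lo\`eve expansion. Your route is more self-contained (no Borell--TIS, no white-noise coupling) and exploits the algebraic structure of Gaussian quadratic forms; the paper's route yields something strictly stronger than convergence in law --- an actual coupling with $|X(\mu_f)-X(\mu)|$ small --- and reuses Lemmas~\ref{not too big} and~\ref{Sodin}, which the paper needs anyway in the proof of Theorem~\ref{main theorem}. Both are clean; yours is arguably the shorter path if one only cares about Proposition~\ref{conv dis} in isolation.
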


 To prove Proposition \ref{conv dis}, we need two results. The first one is a direct consequence of the Borell-TIS inequality  \cite[Theorem 2.1.1]{AT}, which states that if a Gaussian  field, $F$ is almost surely bounded on $B(R)$ then $\mathbb{E}[\sup_{B(R)}|F|]<\infty$.
\begin{lem}
	\label{not too big} 
	Let $\mu$ be a probability measure on $\mathbb{S}^1$. For any $\delta_1>0$ there exists some $M=M(R,\delta_1)$ such that 
	
	\begin{align}
	\mathbb{P}\left(	\underset{B(R)}{\sup}|F_{\mu}|>M\right)\leq \delta_1 \nonumber.
	\end{align}
\end{lem}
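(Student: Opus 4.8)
The plan is to deduce the statement from the Borell--TIS inequality together with Markov's inequality, the only real work being to check the hypothesis of the former. First I would apply Markov:
\[
\mathbb{P}\Bigl(\sup_{B(R)}|F_\mu|>M\Bigr)\le \frac{1}{M}\,\mathbb{E}\Bigl[\sup_{B(R)}|F_\mu|\Bigr],
\]
so it suffices to show $\mathbb{E}[\sup_{B(R)}|F_\mu|]<\infty$, after which one takes $M=M(R,\delta_1):=\mathbb{E}[\sup_{B(R)}|F_\mu|]/\delta_1$. Writing $\sup_{B(R)}|F_\mu|=\max\{\sup_{B(R)}F_\mu,\ \sup_{B(R)}(-F_\mu)\}$ and noting that $-F_\mu$ is again a centred stationary Gaussian field with the same spectral measure $\mu$ (as $\mu$ is symmetric), it is enough to bound $\mathbb{E}[\sup_{B(R)}F_\mu]$; and by the form of Borell--TIS quoted above, this expectation is finite as soon as $F_\mu$ is almost surely bounded on $B(R)$.

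To verify almost sure boundedness I would argue from the covariance. Recall that the covariance of $F_\mu$ is $\rho_\mu(z)=\int_{\mathbb{S}^1}e(\langle z,\lambda\rangle)\,d\mu(\lambda)$. Since $\mu$ is supported on the compact set $\mathbb{S}^1$, the function $z\mapsto\rho_\mu(z)$ is real-analytic on $\mathbb{R}^2$; in particular it is $C^\infty$ with all derivatives bounded on a neighbourhood of $B(R)$. By the standard regularity criteria for Gaussian fields (e.g. the Kolmogorov continuity theorem, or \cite[Section 1.4]{AT}), $F_\mu$ therefore admits a modification with almost surely continuous — indeed $C^\infty$ — sample paths, and a continuous function on the compact ball $B(R)$ is bounded. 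Hence $\sup_{B(R)}|F_\mu|<\infty$ almost surely, Borell--TIS applies, and the lemma follows with $M(R,\delta_1)$ as above. The sample-path regularity step is the main (and essentially only) obstacle; once it is in hand, Borell--TIS and Markov close the argument in one line each.

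Finally, I would remark that the bound can in fact be taken uniformly over all probability measures $\mu$ on $\mathbb{S}^1$, which is what the notation $M(R,\delta_1)$ in the statement suggests and which is convenient for later applications of this lemma. Indeed, since $|\lambda|=1$ on $\mathrm{spt}(\mu)$ one has the uniform estimate
\[
\mathbb{E}\bigl[(F_\mu(x)-F_\mu(y))^2\bigr]=2\bigl(1-\mathrm{Re}\,\rho_\mu(x-y)\bigr)=2\int_{\mathbb{S}^1}\bigl(1-\cos(2\pi\langle x-y,\lambda\rangle)\bigr)\,d\mu(\lambda)\le 4\pi^2|x-y|^2,
\]
so the canonical (Dudley) metric of $F_\mu$ on $B(R)$ is dominated by $C|x-y|$ with $C$ independent of $\mu$; the metric entropy of $B(R)$ for this metric is $\lesssim\log(R/\epsilon)$, and Dudley's entropy bound then yields $\mathbb{E}[\sup_{B(R)}|F_\mu|]\le C_0(R)$ with $C_0(R)$ independent of $\mu$. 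Taking $M=C_0(R)/\delta_1$ gives the uniform version, again at the cost of only a routine entropy estimate.
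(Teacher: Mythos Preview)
Your argument is correct and follows essentially the same route as the paper: smoothness of the covariance (from the compact support of $\mu$) gives almost sure continuity and hence boundedness of $F_\mu$ on $B(R)$, then Borell--TIS yields $\mathbb{E}[\sup_{B(R)}|F_\mu|]<\infty$, and Markov's inequality with $M=\delta_1^{-1}\mathbb{E}[\sup_{B(R)}|F_\mu|]$ finishes. Your additional Dudley-entropy remark giving uniformity in $\mu$ goes beyond what the paper proves and is a useful observation for the later applications.
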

\begin{proof}
	Since $\mu$ is supported on $\mathbb{S}^1$, we have $\int |\lambda|^3 d\mu(\lambda)\leq 1$, thus the covariance function is differentiable; then $F_{\mu}$ is almost surely continuous and thus almost surely bounded in $B(R)$. The
	Borell-TIS inequality \cite[Theorem 2.1.1]{AT} then implies that $\mathbb{E}[\sup_{B(R)}|F_{\mu}|]$ is finite, therefore we can take $M= \delta^{-1}_1  \mathbb{E}[\sup_{B(R)}|F_{\mu}|]$ and apply Markov's inequality. 
\end{proof}

We also need the following lemma from \cite{SO} of which we give the proof for convenience.
\begin{lem}[Lemma 4, \cite{SO}]
	\label{Sodin}
	Let $\{\mu_n\}_{n\in \mathbb{N}}$ be a sequence of probability measures on $\mathbb{S}^1$ such that $\mu_n \Rightarrow \mu$. Then, for any $\alpha>0$ and $\delta_2>0$,  there exists some $n_0=n_0(\alpha,\delta_2)$ such that for all $n\geq n_0$ we have 
	
	\begin{align}
	\underset{B(R)}{\sup}|F_{\mu_n}- F_{\mu}|\leq \alpha \nonumber
	\end{align}
	outside an event of probability $\delta_2$. 
\end{lem}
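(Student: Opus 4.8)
The plan is to place all of the fields $F_{\mu_n}$ and $F_\mu$ on one probability space, built from a single Gaussian input, in such a way that $\mu_n\Rightarrow\mu$ forces $F_{\mu_n}$ and $F_\mu$ to be uniformly close on $B(R)$ with high probability; the lemma then follows by Markov's inequality. First I would use Skorokhod's representation theorem (valid since $\mathbb{S}^1$ is a compact metric space) to produce measurable maps $T_n,T\colon[0,1)\to\mathbb{S}^1$ that push Lebesgue measure forward to $\mu_n$ and to $\mu$ respectively and satisfy $T_n(u)\to T(u)$ for Lebesgue-a.e.\ $u$. Fixing such maps, let $W_1,W_2$ be independent Gaussian white noises on $[0,1)$ with Lebesgue control measure, and for a probability measure $\nu$ on $\mathbb{S}^1$ equipped with such a transport map $T_\nu$ set
\begin{align}
G_\nu(x):=\int_0^1\cos\big(2\pi\langle x,T_\nu(u)\rangle\big)\,dW_1(u)+\int_0^1\sin\big(2\pi\langle x,T_\nu(u)\rangle\big)\,dW_2(u).\nonumber
\end{align}
A direct computation gives $\mathbb{E}[G_\nu(x)G_\nu(y)]=\int_0^1\cos(2\pi\langle x-y,T_\nu(u)\rangle)\,du=\int_{\mathbb{S}^1}e(\langle x-y,\lambda\rangle)\,d\nu(\lambda)$, the last equality using $\nu(I)=\nu(-I)$; hence $G_\nu$ is a realisation of $F_\nu$. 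Taking $\nu=\mu$ and $\nu=\mu_n$ with the \emph{same} $W_1,W_2$ puts all the fields on one space.

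Set $D_n:=F_{\mu_n}-F_\mu$, a centred Gaussian field. By the isometry property of the Wiener integral, $\mathbb{E}[D_n(x)^2]$ is the sum of the squared $L^2([0,1))$-distances between $\cos(2\pi\langle x,T_n(\cdot)\rangle)$ and $\cos(2\pi\langle x,T(\cdot)\rangle)$ and between the corresponding sines; the integrands there are bounded by $4$ and, since $T_n\to T$ a.e.\ and the exponential is continuous, tend to $0$ pointwise, so dominated convergence yields $\mathbb{E}[D_n(x)^2]\to0$, uniformly for $x\in B(R)$ because on $B(R)$ these integrands are Lipschitz in $x$ with an $O(R)$ constant independent of $n$ and $u$ (recall $|T_\nu|\equiv1$). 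Differentiating in $x$ — legitimate, and producing uniformly bounded integrands, precisely because $|T_\nu|\equiv1$ — the same argument gives $\sup_{x\in B(R+1)}\mathbb{E}\big[|\partial^\beta D_n(x)|^2\big]\to0$ for all multi-indices $|\beta|\le2$. I would then invoke the Sobolev embedding $H^2(B(R+1))\hookrightarrow C(\overline{B(R)})$ (valid in dimension $2$ since $2>1$): $\sup_{B(R)}|D_n|\ll_R\|D_n\|_{H^2(B(R+1))}$, so that by Fubini
\begin{align}
\mathbb{E}\Big[\sup_{B(R)}|D_n|^2\Big]\ll_R\int_{B(R+1)}\mathbb{E}\big[|D_n|^2+|\nabla D_n|^2+|\nabla^2D_n|^2\big]\,dx\longrightarrow0,\nonumber
\end{align}
and Markov's inequality gives $\mathbb{P}\big(\sup_{B(R)}|D_n|>\alpha\big)\le\alpha^{-2}\,\mathbb{E}[\sup_{B(R)}|D_n|^2]\le\delta_2$ for all $n\ge n_0(\alpha,\delta_2,R)$.

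The only step with real content is the passage from pointwise $L^2$-smallness of $D_n$ to smallness of its supremum, and this is exactly where it matters that the spectral measures are supported on the \emph{fixed compact} set $\mathbb{S}^1$: compactness of the frequency support is what makes the derivative fields $\partial^\beta D_n$ uniformly controllable and hence the Sobolev step available. An equivalent, estimate-free variant runs as follows: the covariances satisfy $\mathbb{E}[F_{\mu_n}(x)F_{\mu_n}(y)]\to\mathbb{E}[F_\mu(x)F_\mu(y)]$ (again by $\mu_n\Rightarrow\mu$), which together with Gaussianity gives convergence of all finite-dimensional distributions, while the uniform $L^2$-bounds on $\nabla F_{\mu_n},\nabla^2F_{\mu_n}$ give tightness in $C(B(R))$ via Kolmogorov's criterion; hence $F_{\mu_n}\Rightarrow F_\mu$ in $C(B(R))$, and one further application of Skorokhod's theorem yields a coupling on which $\sup_{B(R)}|F_{\mu_n}-F_\mu|\to0$ almost surely, which is more than the lemma requires. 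Everything else — constructing the coupling, the dominated convergence, Markov — is routine.
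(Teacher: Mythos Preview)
Your argument is correct. Both you and the paper couple all the fields through a single Gaussian input and then argue that $\mu_n\Rightarrow\mu$ makes the difference small; however, the implementations differ. The paper builds Gaussian random measures $G,G_n$ on $\mathbb{S}^1$ with control measures $\mu,\mu_n$, writes $F_{\mu}(x)=\int_{\mathbb{S}^1}e(\langle x,\lambda\rangle)\,G(d\lambda)$ (and similarly for $F_{\mu_n}$), asserts $G_n\to G$ in $L^2(\Omega)$, and then simply states that $\sup_{B(R)}|F_{\mu_n}-F_\mu|\to0$ in $L^2(\Omega)$, deferring the details to Sodin's lecture notes. You instead make the coupling explicit by pulling everything back to a fixed white noise on $[0,1)$ via Skorokhod transport maps $T_n\to T$ a.e., and---more importantly---you actually carry out the passage from pointwise $L^2$-smallness to sup-norm smallness, using the Sobolev embedding $H^2\hookrightarrow C$ together with the observation that $|T_\nu|\equiv1$ gives uniform control on derivatives. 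This last point is precisely the place where compactness of the spectral support enters, and you identify it correctly; the paper's proof leaves this step implicit. Your alternative tightness-plus-Skorokhod route is also valid and is closer in spirit to how such statements are often proved in the random-nodal-sets literature. In short: same architecture, but your version is more self-contained on the one step that has real content.
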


	\begin{proof}
	We can associate to $\mu$ the random field $G$ defined on $\mathbb{R}^2$ as follows: for any open and measurable (with respect to $\mu$) subset $A$ of $\mathbb{R}^2$ we let
	
	\begin{align}
	G(A)= N(0,\mu(A)).\nonumber
	\end{align}
Moreover, if $A\cap B= \emptyset$, we require $G(A)$ and $G(B)$ to be independent. We define $G_n$ with respect to $\mu_n$ similarly. Since $\mu$ is compactly supported, we see that $G_n \overset{d}{\rightarrow}G$ and, since a normal random variable is square integrable, we obtain $G_n\rightarrow G$ in $L^2(\Omega)$ (we recall that $\Omega$ is the common probability space of our random objects). By \cite[Theorem 5.4.2]{AT}, we have the $L^2(\Omega)$ representations 
	\begin{align}
	&F_{\mu_n}(x)= \int_{\mathbb{S}^1}e(\langle x\cdot\lambda\rangle)G_n(d\lambda) &F_{\mu}(x)= \int_{\mathbb{S}^1}e(\langle x\cdot\lambda\rangle)G(d\lambda). 	\nonumber
	\end{align}
	From this and the preceding discussion, we deduce that	$\sup_{B(R)}|F_{\mu_n}- F_{\mu}|\rightarrow 0$ in $L^2(\Omega)$ from which the lemma follows. 
\end{proof}

\begin{proof}[Proof of Proposition \ref{conv dis}]

		Fix some $t\in \mathbb{R}$, $\epsilon>0$ and let $X(\mu_{f})= \int_{B(R)}|F_{\mu_{f}}|^2/\pi R^2$. Then, by L\'{e}vy continuity Theorem \cite[Theorem 4.3]{K}, the proposition is equivalent to 
		
			\begin{align}
		\left|\int_{\mathbb{T}^2} \exp ( it M_f(x,r))- \mathbb{E}[ \exp(it  X(\mu))]\right|\leq \epsilon \nonumber
		\end{align}
		for all $E$ large enough.  By Theorem \ref{main theorem} and L\'{e}vy continuity Theorem, we have
		
		\begin{align}
		\left|\int_{\mathbb{T}^2} \exp ( it M_f(x,r))- \mathbb{E}[ \exp(it  X(\mu_{f}))]\right|\leq \epsilon/2 \nonumber
		\end{align}
		for all $E$ large enough. Thus, by the triangle inequality, it suffices to prove 
		
		\begin{align}
		\left|\mathbb{E}[ \exp(it  X(\mu_{f}))]- \mathbb{E}[ \exp(it  X(\mu))]\right|\leq \epsilon/2 \label{3}
		\end{align} 
		for all $E$ large enough. We are going to show that $ X(\mu_{f})$ and $X(\mu)$ are close outside and event of small probability. To see this, we take $\delta_1=\epsilon/8$ in Lemma \ref{not too big} and let $V_1$ be the event that $\sup|F_{\mu}|>M$. We take  $\delta_2=\epsilon/8$, $\mu_n=\mu_{f}$, $\alpha=\sqrt{\epsilon}/(8tM)$  in Lemma \ref{Sodin} and denote by $V_2$ the event that $\sup|F_{\mu_f}- F_{\mu}|> \alpha$.Let $V= V_1\cup V_2$ then $\mathbb{P}(V)\leq \epsilon/4$ for all $E$ large enough. Moreover, outside $V$, we have 
		
		\begin{align}
		&|X(\mu_{f}) -X(\mu)| = \frac{1}{\pi R^2}\int_{B(R)} |F_{\mu}- (F_{\mu}- F_{\mu_{f}})|^2- |F_{\mu}|^2 \nonumber \\
		& \leq  	2\underset{B(R)}{\sup}|F_{\mu_f}- F_{\mu}| |F_\mu|+ 		\underset{B(R)}{\sup}|F_{\mu_f}- F_{\mu}|^2\leq 2M \alpha + \alpha^2\leq \epsilon/4t \label{5}
		\end{align}
and
		\begin{align}
		\left| \exp(it  X(\mu_{f}))- \exp(it  X(\mu))\right|\leq 	\left| \exp(it  X(\mu_{f})- X(\mu))- 1\right| \leq t | X(\mu_{f})- X(\mu)|. \label{6}
		\end{align}
		Combining (\ref{5}) and (\ref{6}) with the fact that $\mathbb{P}(V)\leq \epsilon/4$ we obtain (\ref{3}) as required.
\end{proof}
\subsection{Mass distribution for random waves}
\label{distribution randon waves}
In this section we  study the distribution of the random variable $ \int_{B(R)} |F_\mu|^2/\pi R^2$ as $R\rightarrow \infty$, where $\mu$ is a probability measure on $\mathbb{S}^1$. 
\begin{prop}
	\label{massdistribution}
Suppose that $\mu$ is a probability measure on $\mathbb{S}^1$, write $\mu $ as in (\ref{decomp}) and let $W(\mu_A)$ be as in (\ref{W}).  Then,  as $R\rightarrow \infty$, we have 
 	\begin{align}
 \frac{1}{\pi R^2}\int_{B(R)}|F_{\mu}|^2 \overset{d}{\longrightarrow} \alpha W(\mu_A) +\beta . \nonumber
 \end{align}
\end{prop}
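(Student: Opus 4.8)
The plan is to compute the limit of $\frac{1}{\pi R^2}\int_{B(R)}|F_\mu|^2$ by expanding the covariance structure of $F_\mu$ according to the Lebesgue decomposition $\mu = \alpha\mu_A + \beta\mu_B$, and showing that the atomic part contributes a finite nondegenerate random variable $\alpha W(\mu_A)$, while the nonatomic part self-averages to the deterministic constant $\beta$. First I would use the spectral representation $F_\mu(x) = \int_{\mathbb{S}^1} e(\langle x,\lambda\rangle)\, G(d\lambda)$ from Lemma \ref{Sodin} (via \cite[Theorem 5.4.2]{AT}) to split $F_\mu = \sqrt{\alpha}\, F_A + \sqrt{\beta}\, F_B$, where $F_A$ and $F_B$ are independent centred Gaussian fields with spectral measures $\mu_A$ and $\mu_B$ respectively. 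Then
\begin{align}
\frac{1}{\pi R^2}\int_{B(R)}|F_\mu|^2 &= \frac{\alpha}{\pi R^2}\int_{B(R)}|F_A|^2 + \frac{\beta}{\pi R^2}\int_{B(R)}|F_B|^2 + \frac{2\sqrt{\alpha\beta}}{\pi R^2}\int_{B(R)} F_A F_B, \nonumber
\end{align}
so it suffices to analyse each of the three terms separately as $R\to\infty$.

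For the nonatomic term, I would show $\frac{1}{\pi R^2}\int_{B(R)}|F_B|^2 \to 1$ in $L^2(\Omega)$ (hence in probability). The mean is exactly $\mathbb{E}[|F_B(0)|^2] = \mu_B(\mathbb{S}^1) = 1$ by stationarity, and the variance is controlled by a double integral of $r_B(x-y)^2$ over $B(R)\times B(R)$, where $r_B$ is the covariance function of $F_B$; since $\mu_B$ has no atoms, $\widehat{\mu_B}(x) = r_B(x) \to 0$ as $|x|\to\infty$ by the Riemann–Lebesgue-type lemma (Wiener's lemma: the mean square of $\widehat{\mu_B}$ over large balls equals the sum of squared atom masses, which is zero), so the variance tends to $0$. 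This is the standard ergodicity-type argument and should be routine. For the atomic term, writing $\mu_A = \sum_\xi \sigma_\xi \delta_\xi$, the field $F_A$ is an almost periodic trigonometric sum $F_A(x) = \sum_\xi X_\xi e(\langle x,\xi\rangle)$ with $X_\xi$ i.i.d. complex $N(0,\sigma_\xi)$ satisfying $\overline{X_\xi} = X_{-\xi}$; expanding $|F_A|^2$ and integrating over $B(R)$, the diagonal terms give $\sum_\xi |X_\xi|^2 = W(\mu_A)$ while each off-diagonal term $X_\xi \overline{X_\eta}$ with $\xi\ne\eta$ is multiplied by $\frac{1}{\pi R^2}\int_{B(R)} e(\langle x,\xi-\eta\rangle)\,dx$, which decays like $O((R|\xi-\eta|)^{-1})$ as $R\to\infty$. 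Here there is a mild technical point: if the support of $\mu_A$ is infinite one must truncate — fix $\epsilon>0$, keep finitely many atoms carrying mass $\ge 1-\epsilon$, bound the tail in $L^1(\Omega)$ by its expectation $\epsilon$, and handle the off-diagonal decay only for the finite piece — but since $\mathbb{S}^1$ is compact the atoms of fixed mass are separated, so this causes no real difficulty. Finally, the cross term $\frac{2\sqrt{\alpha\beta}}{\pi R^2}\int_{B(R)} F_A F_B$ has mean zero (independence) and variance bounded by $\frac{C}{R^4}\int_{B(R)^2} r_A(x-y) r_B(x-y)\,dx\,dy$, which by Cauchy–Schwarz is at most the geometric mean of the two variances already shown to vanish, hence goes to $0$.

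Combining the three pieces: the nonatomic term $\to \beta$ in probability, the cross term $\to 0$ in probability, and the atomic term $\to \alpha W(\mu_A)$ in probability (indeed the off-diagonal remainder vanishes in $L^1(\Omega)$ after truncation). By Slutsky's theorem this gives $\frac{1}{\pi R^2}\int_{B(R)}|F_\mu|^2 \overset{d}{\longrightarrow} \alpha W(\mu_A) + \beta$, which is the claim. The main obstacle I anticipate is making the atomic-part argument fully rigorous when $\mathrm{spt}(\mu_A)$ is infinite: one needs uniform-in-$R$ control of the off-diagonal sum $\sum_{\xi\ne\eta}\mathbb{E}|X_\xi \overline{X_\eta}|\cdot|R|\xi-\eta||^{-1}$, and although $\sum_\xi \sigma_\xi = 1$ makes the coefficient sum summable, the separation of atoms on the circle must be used carefully (or the truncation argument invoked) so that the error genuinely tends to zero rather than merely staying bounded.
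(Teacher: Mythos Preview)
Your proposal is correct and follows the same architecture as the paper: decompose $F_\mu=\sqrt{\alpha}F_{\mu_A}+\sqrt{\beta}F_{\mu_B}$ into independent pieces, expand the square into three terms, and handle each separately (the paper states these as Lemmas \ref{decomposing distribution}, \ref{atomic}, \ref{noatomic}). The differences are purely in the technical execution. For the non-atomic term the paper invokes the Wiener--Grenander--Fomin--Maruyama ergodic theorem (Theorem \ref{ergodic}) to obtain almost sure convergence, while you compute the variance directly; your route is more elementary, but note that $r_B(x)\to 0$ pointwise is \emph{not} guaranteed for merely non-atomic $\mu_B$ --- it is Wiener's lemma on the \emph{averaged} square that you need, as you correctly say in parentheses. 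For the atomic term with possibly infinite support, the paper avoids truncation by splitting the off-diagonal sum at $|\xi-\xi'|=R^{-1/2}$ and using that $\sum_{0<|\xi'-\xi|<R^{-1/2}}\sigma_{\xi'}\to 0$; your truncation argument is an equally valid alternative. For the cross term, your Cauchy--Schwarz bound works, but only the non-atomic variance vanishes --- the atomic one remains bounded --- so the conclusion is bounded $\times o(1)\to 0$, not ``both variances vanish'' as you wrote; the paper instead evaluates the cross-term variance as $\int\!\!\int (J_1(R\|t-w\|)/R\|t-w\|)^2\,d\mu_B(t)\,d\mu_A(w)$ and uses directly that $\mu_B$ places no mass near the atoms of $\mu_A$.
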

To prove Proposition \ref{massdistribution}, we need a few preliminary results. 
 \begin{lem}
	\label{decomposing distribution}
	Suppose that $\mu$ is a probability measure on $\mathbb{S}^1$ and write $\mu $ as in (\ref{decomp}).  Then 
	
	\begin{align}
F_{\mu}= \sqrt{\alpha}F_{\mu_{A}} + \sqrt{\beta}F_{\mu_B} \label{21}
\end{align}
where $F_{\mu_{B}}$ and $F_{\mu_A}$ are independent, centred  stationary Gaussian fields.  
\end{lem}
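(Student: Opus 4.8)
The plan is to realise the three fields on a common probability space via the Gaussian spectral (white-noise) representation already used in the proof of Lemma \ref{Sodin}, and then read off the decomposition directly. Recall that to a probability measure $\nu$ on $\mathbb{S}^1$ one associates the Gaussian random measure $G_{\nu}$ with $G_{\nu}(A)\sim N(0,\nu(A))$ and with $G_{\nu}(A),G_{\nu}(B)$ independent whenever $A\cap B=\emptyset$, and that $F_{\nu}(x)=\int_{\mathbb{S}^1}e(\langle x\cdot\lambda\rangle)\,G_{\nu}(d\lambda)$ in the $L^2(\Omega)$ sense by \cite[Theorem 5.4.2]{AT}.

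First I would use that the Lebesgue decomposition makes $\mu_A$ and $\mu_B$ mutually singular: there is a Borel partition $\mathbb{S}^1=S_A\cup S_B$, $S_A\cap S_B=\emptyset$, with $\mu_A(\mathbb{S}^1\setminus S_A)=0$ and $\mu_B(\mathbb{S}^1\setminus S_B)=0$, so that $\mu(\cdot\cap S_A)=\alpha\,\mu_A(\cdot)$ and $\mu(\cdot\cap S_B)=\beta\,\mu_B(\cdot)$. (If $\alpha=0$ or $\beta=0$ the statement is trivial, so assume $\alpha,\beta>0$.) Letting $G=G_{\mu}$ be the Gaussian random measure attached to $\mu$, I would define $G_A(A):=\alpha^{-1/2}G(A\cap S_A)$ and $G_B(A):=\beta^{-1/2}G(A\cap S_B)$. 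A one-line variance computation then shows that $G_A$ and $G_B$ are the Gaussian random measures attached to $\mu_A$ and $\mu_B$ respectively, and since $S_A\cap S_B=\emptyset$ the independence-on-disjoint-sets property of $G$ yields that the families $\{G_A(A)\}$ and $\{G_B(A)\}$ are independent.

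Next I would set $F_{\mu_A}(x):=\int_{\mathbb{S}^1}e(\langle x\cdot\lambda\rangle)\,G_A(d\lambda)$ and likewise $F_{\mu_B}$; by the cited spectral representation these are centred, stationary Gaussian fields with spectral measures $\mu_A$ and $\mu_B$, and they are independent because $G_A$ and $G_B$ are. Unwinding the definitions,
\begin{align}
\sqrt{\alpha}\,F_{\mu_A}(x)+\sqrt{\beta}\,F_{\mu_B}(x)=\int_{S_A}e(\langle x\cdot\lambda\rangle)\,G(d\lambda)+\int_{S_B}e(\langle x\cdot\lambda\rangle)\,G(d\lambda)=\int_{\mathbb{S}^1}e(\langle x\cdot\lambda\rangle)\,G(d\lambda)=F_{\mu}(x),\nonumber
\end{align}
which is (\ref{21}). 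Equivalently, one may bypass the white noise entirely: take $F_{\mu_A},F_{\mu_B}$ independent (on a product space), put $H=\sqrt{\alpha}F_{\mu_A}+\sqrt{\beta}F_{\mu_B}$, observe that $H$ is a centred Gaussian field with $\mathbb{E}[H(x)H(y)]=\alpha\int e(\langle x-y,\lambda\rangle)\,d\mu_A+\beta\int e(\langle x-y,\lambda\rangle)\,d\mu_B=\int e(\langle x-y,\lambda\rangle)\,d\mu$, and invoke Kolmogorov's theorem (as recalled in Section \ref{Gaussian fields}) to conclude $H\overset{d}{=}F_{\mu}$.

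I do not expect a serious obstacle here; the only point requiring care is the interpretation of (\ref{21}), which is an equality of fields only after a suitable coupling — the content being that $F_{\mu}$ admits a version splitting as stated — together with the fact that the two pieces can be taken independent, which is immediate from either construction above. One should also keep in mind that the representations hold in the $L^2(\Omega)$ sense; by almost-sure continuity of the fields on $B(R)$ (as in Lemma \ref{not too big}) the identity can be upgraded to an almost-sure equality on $B(R)$ if needed later.
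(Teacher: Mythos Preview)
Your proposal is correct. In fact, your second (``equivalently'') argument is exactly the paper's proof: take $F_{\mu_A},F_{\mu_B}$ independent, compute the covariance of $\sqrt{\alpha}F_{\mu_A}+\sqrt{\beta}F_{\mu_B}$ using that the cross terms vanish by independence, observe it is the Fourier transform of $\alpha\mu_A+\beta\mu_B=\mu$, and conclude by the fact that a centred Gaussian field is determined by its covariance.

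Your first argument, via the restriction of the white noise $G_\mu$ to the pieces $S_A,S_B$ of a singular decomposition, is a genuinely different route. It buys a bit more than the paper needs: it produces an explicit coupling on the same probability space for which \eqref{21} holds as an almost-sure (indeed pointwise in $L^2(\Omega)$) identity, not merely equality in law. The paper's covariance computation only gives equality in distribution, which is all that is required downstream (Lemmas \ref{atomic} and \ref{noatomic} concern distributions), and is slightly shorter; your white-noise construction is the cleaner explanation of \emph{why} the two components can be taken independent, namely mutual singularity of $\mu_A$ and $\mu_B$. Either way there is no gap.
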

\begin{proof}
Since both the left and the right hand side of (\ref{21}) are Gaussian fields with mean zero, they are fully determined by their covariances.  The covariance of the right hand side is 

\begin{align}
&\mathbb{E}[\left(\sqrt{\alpha}F_{\mu_{A}} + \sqrt{\beta}F_{\mu_B}\right)(x)\left(\sqrt{\alpha}F_{\mu_{A}} + \sqrt{\beta}F_{\mu_B}\right)(y)]  \nonumber \\ 
&= \alpha \mathbb{E}[F_{\mu_{A}}(x)F_{\mu_B}(y)] + \beta\mathbb{E}[F_{\mu_{A}}(x)F_{\mu_B}(y)] \nonumber \\ 
&= \int_{\mathbb{S}^1} e\left(\langle x-y, \lambda \rangle\right) d(\alpha\mu_{A}(\lambda)+ \beta\mu_B(\lambda)) \nonumber
\end{align}
where in the second line we have used the fact that the cross term vanishes by independence and in the last line we have used the definition of $F_{\mu_{B}}$ and $F_{\mu_A}$. Therefore, the spectral measure of $\sqrt{\alpha}F_{\mu_{A}} + \sqrt{\beta}F_{\mu_B}$ is equal to the spectral measure of $F_{\mu}$ and this proves the Lemma. 
\end{proof}
We can understand the distribution of the atomic part of $F_{\mu}$ directly as follows:  
\begin{lem}
	\label{atomic}
	Suppose that $\mu_A$ is a purely atomic measure supported on $\mathbb{S}^1$. Then  
	\begin{align}
\frac{1}{\pi R^2}\int_{B(R)}|F_{\mu_A}|^2 \overset{d}{\longrightarrow} W(\mu_A) \nonumber
\end{align}
as $R \rightarrow \infty$ where $W(\mu_A)$ is as in (\ref{W}).
\end{lem}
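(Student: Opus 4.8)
The plan is to exploit that, $\mu_A$ being purely atomic, $F_{\mu_A}$ is an explicit random trigonometric series. By the spectral representation \cite[Theorem 5.4.2]{AT} used in the proof of Lemma \ref{Sodin}, the Gaussian random measure attached to $\mu_A$ is carried by the atoms, so that, as an identity in $L^2(\Omega)$ for each $x$,
\begin{align}
F_{\mu_A}(x) = \sum_{\xi \in \text{spt}(\mu_A)} X_{\xi}\, e(\langle x,\xi\rangle), \nonumber
\end{align}
the series converging since $\sum_\xi \mathbb{E}|X_\xi|^2 = \sum_\xi \sigma_\xi = 1$, with the $X_\xi$ independent complex $N(0,\sigma_\xi)$ subject to the Hermitian symmetry $\overline{X_\xi}=X_{-\xi}$ forced by $F_{\mu_A}$ being real, exactly as in (\ref{W}). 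Expanding the square and integrating term by term one gets
\begin{align}
\frac{1}{\pi R^2}\int_{B(R)}|F_{\mu_A}|^2 = \sum_{\xi,\eta \in \text{spt}(\mu_A)} X_\xi X_\eta\,\widehat{K}_R(\xi+\eta), \qquad \widehat{K}_R(\zeta):=\frac{1}{\pi R^2}\int_{B(R)}e(\langle x,\zeta\rangle)\,dx. \nonumber
\end{align}
Here $\widehat{K}_R(0)=1$, while for $\zeta\neq 0$ a Bessel computation gives $\widehat{K}_R(\zeta)=J_1(2\pi R|\zeta|)/(\pi R|\zeta|)=O\big((R|\zeta|)^{-3/2}\big)\to 0$. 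Hence the diagonal terms $\eta=-\xi$ contribute $\sum_\xi X_\xi X_{-\xi}=\sum_\xi|X_\xi|^2=W(\mu_A)$, and morally everything else should disappear as $R\to\infty$.

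The one genuine obstacle is that $\text{spt}(\mu_A)$ may be infinite and the atoms may accumulate on $\mathbb{S}^1$, so the off-diagonal differences $\xi+\eta$ can be arbitrarily small without vanishing, and the $\widehat{K}_R$-decay cannot be applied uniformly over the whole (infinite) sum. I would handle this by truncation: given $\delta>0$, pick a finite \emph{symmetric} set $S=-S$ of atoms with $\mu_A(\mathbb{S}^1\setminus S)<\delta$, and decompose $F_{\mu_A}=F_S+F_S'$ into the sum over $S$ and its complement, which are two independent real Gaussian fields ($F_S$ real since $S$ is symmetric, $F_S'$ real as $F_{\mu_A}-F_S$). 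For the finite part, $S$ has only finitely many off-diagonal pairs, each with $|\xi+\eta|\geq c_S>0$, so $\tfrac{1}{\pi R^2}\int_{B(R)}F_S^2\to W_S:=\sum_{\xi\in S}|X_\xi|^2$ almost surely. For the tail, stationarity gives $\mathbb{E}\big[\tfrac{1}{\pi R^2}\int_{B(R)}(F_S')^2\big]=\mu_A(\mathbb{S}^1\setminus S)<\delta$ \emph{uniformly in $R$}, and Cauchy--Schwarz on $\Omega$ bounds the cross term $\tfrac{2}{\pi R^2}\int_{B(R)}F_SF_S'$ in $L^1(\Omega)$ by $2\big(\mathbb{E}\tfrac{1}{\pi R^2}\int F_S^2\big)^{1/2}\big(\mathbb{E}\tfrac{1}{\pi R^2}\int (F_S')^2\big)^{1/2}=O(\sqrt\delta)$; so by Markov, off an event of probability $o_\delta(1)$ uniform in $R$, one has $\big|\tfrac{1}{\pi R^2}\int_{B(R)}|F_{\mu_A}|^2-\tfrac{1}{\pi R^2}\int_{B(R)}F_S^2\big|=o_\delta(1)$.

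To conclude, I would combine this with $\mathbb{E}|W(\mu_A)-W_S|=\sum_{\xi\notin S}\sigma_\xi<\delta$, so that $W_S\to W(\mu_A)$ in $L^1(\Omega)$, hence in distribution, as $\delta\to 0$. Writing $X_R:=\tfrac{1}{\pi R^2}\int_{B(R)}|F_{\mu_A}|^2$ and $X_R^S:=\tfrac{1}{\pi R^2}\int_{B(R)}F_S^2$ and splitting
\begin{align}
\big|\mathbb{E}e^{i\theta X_R}-\mathbb{E}e^{i\theta W(\mu_A)}\big|\leq \big|\mathbb{E}e^{i\theta X_R}-\mathbb{E}e^{i\theta X_R^S}\big|+\big|\mathbb{E}e^{i\theta X_R^S}-\mathbb{E}e^{i\theta W_S}\big|+\big|\mathbb{E}e^{i\theta W_S}-\mathbb{E}e^{i\theta W(\mu_A)}\big|, \nonumber
\end{align}
the first term is $o_\delta(1)$ uniformly in $R$ by the previous paragraph and $|e^{ia}-e^{ib}|\le\min(|a-b|,2)$, the second tends to $0$ as $R\to\infty$ by the finite-part convergence, and the third is $O_\theta(\delta)$; letting $R\to\infty$, then $\delta\to 0$, and applying L\'{e}vy's continuity theorem \cite[Theorem 4.3]{K} gives $\tfrac{1}{\pi R^2}\int_{B(R)}|F_{\mu_A}|^2\overset{d}{\longrightarrow}W(\mu_A)$. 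As indicated, the main difficulty is exactly the possible accumulation of atoms, which is what the truncation together with the uniform-in-$R$ $L^1$ control of the tail is designed to overcome.
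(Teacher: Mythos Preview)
Your proof is correct and takes a genuinely different route from the paper's. Both start from the same explicit series representation $F_{\mu_A}(x)=\sum_\xi X_\xi e(\langle x,\xi\rangle)$ and the same diagonal/off-diagonal splitting, but the handling of the off-diagonal remainder differs. The paper keeps the full (possibly infinite) off-diagonal sum $Z=\sum_{\xi\neq\xi'} X_\xi\overline{X_{\xi'}}\, J_1(R|\xi-\xi'|)/(R|\xi-\xi'|)$ and shows $Z\overset{d}{\to}0$ by an $R$-adaptive dyadic split at the scale $|\xi-\xi'|\sim R^{-1/2}$: far pairs get the Bessel decay $J_1(T)\ll T^{-1/2}$ together with a first-moment Markov bound, while near pairs use that $\sum_{\xi'\colon 0<|\xi-\xi'|<R^{-1/2}}\sigma_{\xi'}\to 0$ for each fixed $\xi$. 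You instead truncate the atom set to a finite symmetric $S$ and control the tail $F_S'$ uniformly in $R$ via the single stationarity identity $\mathbb{E}\,\tfrac{1}{\pi R^2}\int_{B(R)}(F_S')^2=\mu_A(\mathbb{S}^1\setminus S)$, then pass to the limit in two stages through characteristic functions. Your approach is more modular and avoids any Bessel asymptotics beyond the trivial $\widehat{K}_R(\zeta)\to 0$ on the finitely many surviving off-diagonal frequencies; the paper's approach is a bit more direct and avoids the extra $\delta\to 0$ layer. Both deal with the same genuine obstacle---possible accumulation of atoms---but yours isolates it cleanly as a uniform-in-$R$ $L^1$ tail bound, while the paper absorbs it into the near-pair estimate.
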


\begin{proof}
We claim that we can write explicitly 
\begin{align}
F_{\mu_A}(x)= \sum_{\xi \in \text{spt}(\mu_A)}X_{\xi}e(\langle\xi , x\rangle) \label{22}
\end{align}	
where $X_{\xi}$ are as in (\ref{W}). First, by Kolmogorov's two-series theorem \cite[Lemma 3.16]{K},  the sum in (\ref{22}) absolutely converges almost surely because the sum of the variances of the $X_{\xi}$'s is $\sum_{ \xi} \sigma_{\xi}=1$. Thus, we can compute the covariance function $r(x-y)= \sum \sigma_{\xi}e(\langle x-y , \xi\rangle)$, and observe that it is the Fourier transform of $\mu$. This proves the claim. 

We square $F_{\mu_A}(x)$ in (\ref{22}) and separate the diagonal terms to obtain 
\begin{align}
\frac{1}{\pi R^2}\int_{B(R)}|F_{\mu_A}|^2 & = \sum_{ \xi}|X_{\xi}|^2 + \frac{1}{\pi R^2}\sum_{\xi\neq \xi'}X_{\xi}\overline{X_{\xi'}}\int_{B(R)}e(\langle\xi-\xi',x\rangle)dx \nonumber \\
&= W(\mu_A)+ O\left(\sum_{\xi\neq \xi'}X_{\xi}\overline{X_{\xi'}}\int_{B(1)} e(\langle\xi-\xi', Rx\rangle) dx \right) \nonumber \\
&= W(\mu_A) + O\left(\sum_{\xi\neq \xi'}X_{\xi}\overline{X_{\xi'}}\frac{J_1(R||\xi-\xi'||)}{R||\xi-\xi'||}\right) 
\label{8}
\end{align}
where $J_1(\cdot)$ is the Bessel function of the first kind. We observe\footnote{In general it is not true that for any two series of random variables $X_n\overset{d}{\longrightarrow} X$ and $Y_n\overset{d}{\longrightarrow}Y$ then $X_n+Y_n\overset{d}{\longrightarrow} X+Y$.} that, in order to prove the lemma, it is enough to prove that the  error term in (\ref{8}) converges in distribution to 0. Indeed, writing $Z$ for the  error term in (\ref{8}), if $Z\overset{d}{\longrightarrow} 0$ then $Z$ converges in probability to $0$. Thus, the vector $(W(\mu_{A}),Z)$ converges in distribution to the vector $(W(\mu_{A}),0)$. Therefore, by the continuous mapping theorem, $W(\mu_{A})+Z \overset{d}{\longrightarrow} W(\mu_{A})$. 

 Now, we are going to prove that $Z\overset{d}{\longrightarrow}0$. Since $J_1(T)\ll T^{-1/2}$ for $T$ large and $J_1(T)/T=O(1)$ for $T$ small, $Z$ can be bounded as 
\begin{align}
	\sum_{\xi\neq \xi'}X_{\xi}\overline{X_{\xi'}}\frac{J_1(R||\xi-\xi'||)}{R||\xi-\xi'||} \ll R^{-3/4}	\sum_{|\xi- \xi'|>R^{-1/2}}|X_{\xi}||\overline{X_{\xi'}}| +\sum_{\substack{|\xi- \xi'|<R^{-1/2} \\ \xi\neq \xi'}}|X_{\xi}||\overline{X_{\xi'}}|. \label{17}
\end{align}
Since $\mathbb{E}[ \sum_{\xi\neq \xi'} |X_{\xi}\overline{X_{\xi'}}|] \leq (2/\pi)^{1/2} \sum_{\xi\neq \xi'} \sigma_{\xi}\sigma_{\xi'}\leq 1 $, we can apply Markov inequality to obtain 

\begin{align}
\mathbb{P}\left(\sum_{\xi\neq \xi'} |X_{\xi}||\overline{X_{\xi'}}|>R^{1/2}\right)\leq 1/R^{1/2}. \nonumber
\end{align}
Therefore, the first term on the right hand side of (\ref{17}) tends to zero outside an event of probability $1/R^{1/2}$. To bound the second term in (\ref{17}), we observe
\begin{align}
\mathbb{E}\left[ \sum_{\substack{|\xi- \xi'|<R^{-1/2} \\ \xi\neq \xi'}} |X_{\xi}\overline{X_{\xi'}}|\right]\ll \sum_{ \xi}\sigma_{\xi} \sum_{\substack{|\xi- \xi'|<R^{-1/2} \\ \xi\neq \xi'}}\sigma_{\xi'}\longrightarrow 0 \nonumber
\end{align}
as $R\rightarrow \infty$, because (for fixed $\xi$)  $\sum\sigma_{\xi'}=o(1)$ and $\sum\sigma_{\xi}\leq 1$. It follows that also the second term in (\ref{17}) converges in distribution to $0$. Hence, the error term in \eqref{8} converges in distribution to zero and the lemma follows. 
\end{proof}
Understanding the non-atomic part of $F_{\mu}$ requires  the following deep theorem due to Wiener and  Grenander-Fomin-Maruyama (see \cite[Theorem 3]{SO} and references therein): 

\begin{thm}
	\label{ergodic}
	Suppose that the spectral measure $\mu$ (supported on $\mathbb{S}^1$) of a  stationary Gaussian field $F=F_{\mu}$  has no atoms. Then, for any random variable $H(F)$ with $\mathbb{E}[|H(F)|]<\infty$, we have 
	\begin{align}
	\lim\limits_{R\rightarrow \infty} \frac{1}{\pi R^2}\int_{B(R)}H(F(x))dx= \mathbb{E}[H(F)]. \nonumber
	\end{align}
	almost surely and in $L^1$. 
\end{thm}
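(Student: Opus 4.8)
The plan is to combine the spectral ($L^2(\Omega)$) representation of $F_\mu$ with a classical mean ergodic theorem for the $\mathbb{R}^2$-action by translations on the probability space. First I would recall, as in the proof of Lemma \ref{Sodin}, that by \cite[Theorem 5.4.2]{AT} there is a complex Gaussian random measure $G$ on $\mathbb{S}^1$ with control measure $\mu$ such that $F_\mu(x) = \int_{\mathbb{S}^1} e(\langle x,\lambda\rangle)\, G(d\lambda)$ in $L^2(\Omega)$. Translation $F_\mu(\cdot) \mapsto F_\mu(\cdot + \tau)$ corresponds on the spectral side to multiplication of $G(d\lambda)$ by the unimodular factor $e(\langle \tau,\lambda\rangle)$; this defines a measure-preserving $\mathbb{R}^2$-action $\{T_\tau\}$ on $(\Omega,\mathcal{F},\mathbb{P})$ (after passing to the $\sigma$-algebra generated by $F_\mu$, which loses nothing since $H(F)$ is $F$-measurable). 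The key structural input is that \emph{this action is ergodic precisely because $\mu$ has no atoms}: an invariant $L^2(\Omega)$ function expands in the Wiener chaos, and on the $n$-th chaos the action is multiplication by $e(\langle \tau, \lambda_1 + \dots + \lambda_n\rangle)$ on $L^2_{\mathrm{sym}}(\mu^{\otimes n})$; averaging over $\tau$ kills everything except the set $\{\lambda_1 + \dots + \lambda_n = 0\}$, which has $\mu^{\otimes n}$-measure zero when $\mu$ is non-atomic (here one uses that the diagonal-type sets are $\mu^{\otimes n}$-null, which is exactly the no-atoms hypothesis). Hence only the $0$-th chaos (constants) is invariant, i.e. the action is ergodic.

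Next I would invoke the Wiener pointwise ergodic theorem (or the Birkhoff--Wiener theorem for $\mathbb{R}^d$-actions along the balls $B(R)$, which form a Følner family) to conclude that for any $\Phi \in L^1(\Omega)$,
\begin{align}
\frac{1}{\pi R^2}\int_{B(R)} \Phi \circ T_x \, dx \longrightarrow \mathbb{E}[\Phi] \nonumber
\end{align}
almost surely and in $L^1(\Omega)$ as $R \to \infty$. Applying this with $\Phi = H(F)$ (which is $T_x$-equivariant in the sense $\Phi \circ T_x = H(F(\cdot + x))$ evaluated appropriately, so that $\Phi \circ T_x$ is the random variable $\omega \mapsto H(F(x)(\omega))$), the left-hand side is exactly $\frac{1}{\pi R^2}\int_{B(R)} H(F(x))\,dx$ and the right-hand side is $\mathbb{E}[H(F)]$, which is the claim. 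One should check measurability of $(x,\omega) \mapsto H(F(x,\omega))$ so that the spatial integral makes sense; this follows from joint measurability of $F$ and measurability of $H$.

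The main obstacle, and the step deserving the most care, is the ergodicity claim: reducing it to a chaos computation and then showing that the relevant "resonant" sets $\{\lambda_1 + \dots + \lambda_n = 0\}$ are $\mu^{\otimes n}$-null exactly when $\mu$ is atom-free. The cleanest route is induction on $n$: fixing $\lambda_2, \dots, \lambda_n$, the fibre over a fixed value of $\lambda_1$ is a single point, so by Fubini the set has measure $\int \mu(\{-\lambda_2 - \dots - \lambda_n\}) \, d\mu^{\otimes (n-1)} = 0$ since $\mu$ has no atoms. Everything else — the spectral representation, the Følner property of balls, the statement of Wiener's theorem — is standard and can be cited, which is why the authors simply attribute the result to Wiener and Grenander--Fomin--Maruyama and refer to \cite[Theorem 3]{SO}; in the write-up I would likewise give the chaos/ergodicity argument in a sentence or two and cite the ergodic theorem rather than reprove it.
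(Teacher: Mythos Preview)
The paper does not prove this theorem at all: it is quoted as a black box, attributed to Wiener and Grenander--Fomin--Maruyama, with a pointer to \cite[Theorem 3]{SO}. So there is no ``paper's own proof'' to compare against; what you have written is a correct outline of the classical argument behind that citation, and indeed you already noticed this in your last paragraph.

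For the record, your sketch is the standard one and is sound. The two ingredients are exactly (i) ergodicity of the translation action on the Gaussian space when $\mu$ has no atoms, and (ii) the Wiener pointwise ergodic theorem over balls. Your chaos argument for (i) is correct: invariance forces the symmetric kernel on the $n$-th chaos to be supported on $\{\lambda_1+\cdots+\lambda_n=0\}$, and Fubini plus the no-atoms hypothesis gives that this set is $\mu^{\otimes n}$-null. One small cosmetic point: the Fomin--Grenander--Maruyama theorem is often stated as ``$F_\mu$ is ergodic (equivalently, weakly mixing) iff $\mu$ has no atoms'', and many references prove it via the covariance (showing $\frac{1}{\pi R^2}\int_{B(R)} |r_\mu(x)|^2\,dx \to 0$, which is Wiener's lemma on Fourier transforms of non-atomic measures) rather than through chaos; either route is fine. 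If you want to match the paper's treatment, simply cite the result as they do and move on.
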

Taking $H(F)= F(0)^2$ so that $H(F(x))= H(F(0+x))= F(x)^2$ and  $\mathbb{E}[|F(0)^2|]= 1$,  we obtain the following lemma:
 
 \begin{lem}
 	\label{noatomic}
 		Suppose that $\mu_{B}$ is a probability measure supported on $\mathbb{S}^1$ with no atoms. Then, 	as $R \rightarrow \infty$, we have
 	\begin{align}
 	\frac{1}{ \pi R^2}\int_{B(R)}|F_{\mu_{B}}|^2 \overset{d}{\longrightarrow} 1 .\nonumber
 	\end{align}
 \end{lem}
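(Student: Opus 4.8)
The plan is to deduce this directly from Theorem \ref{ergodic}, which does all the heavy lifting, so the argument is essentially bookkeeping. First I would note that $F_{\mu_B}$ is, by construction, a centred stationary Gaussian field whose spectral measure $\mu_B$ has no atoms, so the hypothesis of Theorem \ref{ergodic} is satisfied. The next step is to choose the functional $H$ correctly. Set $H(F):=F(0)^2$, which is a measurable function of the field; by stationarity the value of $H$ on the field translated by $x$ is $\bigl(F(\cdot+x)\bigr)(0)^2=F(x)^2=|F_{\mu_B}(x)|^2$, the last equality because $F_{\mu_B}$ is real-valued. Hence $\frac{1}{\pi R^2}\int_{B(R)}|F_{\mu_B}(x)|^2\,dx$ is precisely the ergodic average appearing in the conclusion of Theorem \ref{ergodic}.

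It then remains to verify the integrability hypothesis $\mathbb{E}[|H(F)|]<\infty$ and to identify the limit. Since $\mu_B$ is a probability measure, the covariance of $F_{\mu_B}$ at the origin is $r(0)=\int_{\mathbb{S}^1}1\,d\mu_B(\lambda)=1$, so $\mathbb{E}[|H(F)|]=\mathbb{E}[F(0)^2]=r(0)=1<\infty$. Applying Theorem \ref{ergodic} with this $H$ therefore gives
\begin{align}
\frac{1}{\pi R^2}\int_{B(R)}|F_{\mu_B}(x)|^2\,dx\ \longrightarrow\ \mathbb{E}[F(0)^2]=1 \nonumber
\end{align}
almost surely (and in $L^1$) as $R\to\infty$.

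Finally I would invoke the elementary fact that almost sure convergence (or even $L^1$ convergence) to a deterministic constant implies convergence in distribution to that constant, which yields the claimed $\overset{d}{\longrightarrow}1$. I do not expect any genuine obstacle: the only points deserving a line of justification are that $F\mapsto F(0)^2$ is an admissible functional for Theorem \ref{ergodic} and that its spatial translates are exactly $F(x)^2$, both immediate from stationarity, together with the computation $\mathbb{E}[F(0)^2]=\mu_B(\mathbb{S}^1)=1$ that pins down both the finiteness of the mean and the value of the limit.
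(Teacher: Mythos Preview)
Your proposal is correct and follows exactly the same approach as the paper: apply Theorem~\ref{ergodic} with $H(F)=F(0)^2$, use that $\mathbb{E}[F(0)^2]=\mu_B(\mathbb{S}^1)=1$, and then pass from almost sure convergence to convergence in distribution. The paper states this derivation in a single sentence immediately preceding the lemma; your version simply spells out the bookkeeping in more detail.
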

We are now ready to give the proof of Proposition \ref{massdistribution}: 
\begin{proof}[Proof of Proposition \ref{massdistribution}]
 By Lemma \ref{decomposing distribution}, we can write
 \begin{align}
 \frac{1}{\pi R^2}\int_{B(R)}|F_{\mu}|^2&= \alpha 	\frac{1}{\pi R^2}\int_{B(R)}|F_{\mu_{A}}|^2 + \beta 	\frac{1}{\pi R^2}\int_{B(R)}|F_{\mu_B}|^2 \nonumber \\
 & +2\sqrt{\alpha\beta}	\frac{1}{\pi R^2}\int_{B(R)}(F_{\mu_{A}}\cdot F_{\mu_B}). \label{16}
 \end{align}
By Lemma \ref{atomic}, the first term in (\ref{16}) converges in distribution to $W(\mu_A)$ as $R\rightarrow \infty$. By Lemma \ref{noatomic}, the second term converges in distribution to $\beta$. Therefore, arguing as in Lemma \ref{atomic},  it is enough to prove
  
  \begin{align}
  \frac{1}{\pi R^2}\int_{B(R)}(F_{\mu_{A}}\cdot F_{\mu_B})dx \overset{d}{\longrightarrow}0. \label{33}
  \end{align}
 By the independence of $F_{\mu_{B}}$ and $F_{\mu_A}$,  we have
 \begin{align}
 \mathbb{E}\left[\frac{1}{\pi R^2}\int_{B(R)}(F_{\mu_A}\cdot F_{\mu_B})dx\right]= 0 \label{34}
 \end{align}
 and 
 \begin{align}
 &\mathbb{E}\left[\left(\frac{1}{\pi R^2}\int_{B(R)}(F_{\mu_A}\cdot F_{\mu_B})dx\right)^2\right]= \nonumber \\
 =& \frac{1}{\pi^2 R^4}\int_{B(R)}\int_{B(R)}\mathbb{E}[F_{\mu_A}(x)\cdot F_{\mu_A}(y)]\mathbb{E}[ F_{\mu_B}(x)F_{\mu_B}(y)]dxdy \nonumber \\
 =&  \frac{1}{\pi^2 R^4}\int_{B(R)}\int_{B(R)} r_{\mu_{B}}(x-y)r_{\mu_A}(x-y) dx dy\label{7}
 \end{align}
 where $r_{\mu_{B}}$ and $r_{\mu_A}$ are the covariance functions of  $F_{\mu_{B}}$ and $F_{\mu_A}$ respectively. Writing 
 \begin{align}
 r_{\mu}(x-y)= \int_{\mathbb{S}^1} e(\langle x-y, t\rangle)d\mu(t) \nonumber
 \end{align}
 and changing the order of integration, upon bearing in mind that $\mu_{A}$ is invariant under   rotation by $\pi$, shows that (\ref{7}) is equal to 
 
 \begin{align}
 &\frac{1}{\pi^2 R^4}\int_{B(R)}\int_{B(R)}\int_{\mathbb{S}^1}\int_{\mathbb{S}^1} e(\langle x-y, t- w\rangle)dxdy d\mu_{B}(t)d\mu_{A}(w) \nonumber \\
 =&  \int_{\mathbb{S}^1}\int_{\mathbb{S}^1} \left(\frac{J_1(R||t-w||)}{R||t-w||}\right)^2d\mu_{B}(t)d\mu_{A}(w) \label{23} 
 \end{align}
 where the second line follows from a similar computation to (\ref{8}). Now, we can split the double integral on the right hand side of (\ref{23}) as 
 \begin{align}
 \left(	\int \int_{||t-w||>R^{-1/2}} + 	\int \int_{||t-w||\leq R^{-1/2}}\right)\left(\frac{J_1(R||t-w||)}{R||t-w||}\right)^2d\mu_{B}(t)d\mu_{A}(w). \label{24}
 \end{align}
 As $J_1(T) \ll T^{-1/2}$ for $T$ large enough, we can bound the first integral in (\ref{24}) by $R^{-3/2}$. For the second integral in (\ref{24}) we observe that $J_1(T)/T=O(1)$ for $T$ small, so we fix $w$ to see that 
 \begin{align}
 \int_{||t-w||\leq R^{-1/2}}d\mu_{B}(t)=o(1) \label{35}
 \end{align}
  as $\mu_{B}$ has no atoms. Therefore, combining (\ref{34}), (\ref{24}) and (\ref{35}), we have shown that the third term in (\ref{16}) has zero mean and variance tending to $0$ as $R\rightarrow \infty$; thus, it converges in distribution to $0$. This proves (\ref{33}) and the Proposition follows. 
\end{proof}
\subsection{Concluding the proof of Theorem \ref{main corollary}} 
To deduce convergence of the moments of a random variable from its convergence in distribution, by Lebesgue dominated convergence Theorem, it is enough to show that the third moment is bounded. In our case, we prove the following: 
\begin{lem}
	\label{third moment}
Let $E$ be an integer representable as the sum of two squares such that the number of 6-tuples $(\xi_1,...,\xi_6)\in \mathcal{E}^6$ satisfying 

 \begin{align}
\xi_1+ \xi_2+ \xi_3+\xi_4+ \xi_5+\xi_6=0 \nonumber
\end{align}
is
\begin{align}
15 N^3 + O(N^{\gamma 6}) \nonumber
\end{align}
for some $0<\gamma<1/2$; moreover, suppose that $f$ as in \eqref{function} satisfies $A_2$. Then, for all fixed $R>1$ and $r= R/\sqrt{E}$, we have the uniform bound 

\begin{align}
\int_{\mathbb{T}^2}dx \left(\frac{1}{\pi r^2}\int_{B(x,r)}|f(y)|^2dy\right)^3=O(1). \nonumber
\end{align}

\end{lem}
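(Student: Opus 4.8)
The plan is to expand the cube directly and reduce everything to counting additive relations among frequencies. Write $f(y)=\sum_{\xi}a_\xi e(\langle \xi,y\rangle)$ and fix $x$. Since $\frac{1}{\pi r^2}\int_{B(x,r)}e(\langle\eta,y\rangle)\,dy = e(\langle\eta,x\rangle)\,\widehat{\mathbf 1}_{B(1)}(r\eta)$ with $\widehat{\mathbf 1}_{B(1)}(u)=J_1(2\pi|u|)/(|u|)$ up to normalising constants (equivalently, $\ll \min(1,(r|\eta|)^{-3/2})$), expanding $|f(y)|^2=\sum_{\xi_1,\xi_2}a_{\xi_1}\overline{a_{\xi_2}}e(\langle\xi_1-\xi_2,y\rangle)$ and cubing gives a sixfold sum over $(\xi_1,\dots,\xi_6)\in\mathcal E^6$ of
\[
a_{\xi_1}\overline{a_{\xi_2}}a_{\xi_3}\overline{a_{\xi_4}}a_{\xi_5}\overline{a_{\xi_6}}\;\prod_{\text{pairs}} e(\langle\cdot,x\rangle)\,\widehat{\mathbf 1}_{B(1)}(r(\xi_{2j-1}-\xi_{2j})).
\]
Now integrate over $x\in\mathbb T^2$: the oscillatory factor in $x$ is $e(\langle(\xi_1-\xi_2)+(\xi_3-\xi_4)+(\xi_5-\xi_6),x\rangle)$, which integrates to $1$ precisely when $(\xi_1-\xi_2)+(\xi_3-\xi_4)+(\xi_5-\xi_6)=0$ and to $0$ otherwise. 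So the whole integral becomes a sum over $6$-tuples in $\mathcal E^6$ with $\xi_1-\xi_2+\xi_3-\xi_4+\xi_5-\xi_6=0$ of the coefficient product times the (bounded) Bessel factors.

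The next step is to bound this sum. Using $|a_\xi|^2\le u(N)/N$ (assumption $A2$), each of the six coefficients contributes a factor $\le (u(N)/N)^{1/2}$, so the coefficient product is $\le (u(N)/N)^3$, and the Bessel factors are $O(1)$; hence the integral is $O\!\big((u(N)/N)^3 \cdot \#\{6\text{-tuples in }\mathcal E^6: \xi_1-\xi_2+\xi_3-\xi_4+\xi_5-\xi_6=0\}\big)$. Since $\xi\mapsto -\xi$ is a bijection of $\mathcal E$, this count is exactly the count of solutions of $\eta_1+\dots+\eta_6=0$ in $\mathcal E^6$, which by the hypothesis of the lemma is $15N^3+O(N^{6\gamma})$. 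Therefore the integral is $O\big((u(N)/N)^3(N^3+N^{6\gamma})\big)=O\big(u(N)^3(1+N^{6\gamma-3})\big)=O(u(N)^3)=O(N^{o(1)})$. This is $N^{o(1)}$, not $O(1)$, so a naive term-by-term estimate loses a bit; the fix is to be more careful about which tuples actually carry mass.

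The main obstacle, and the point requiring care, is to get $O(1)$ rather than $O(u(N)^3)$. The resolution is that the ``main term'' $15N^3$ comes from the \emph{diagonal} tuples --- those where the six indices pair up so that each difference $\xi_{2j-1}-\xi_{2j}$ is zero or the pairing matches an index with its negative --- and on exactly those tuples one has extra savings: either the coefficient product telescopes using $\sum_\xi|a_\xi|^2=1$ (giving genuinely $O(1)$ without the $u(N)$ loss), or, when a difference $\xi_{2j-1}-\xi_{2j}$ is \emph{nonzero}, the corresponding Bessel factor is $O((r|\xi_{2j-1}-\xi_{2j}|)^{-3/2})$ and since $|\xi_{2j-1}-\xi_{2j}|\gg 1$ and $r=R/\sqrt E$ with $|\xi|=\sqrt E$, this is a genuine gain that kills the $u(N)$ factors coming from the off-diagonal contribution. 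Concretely I would split the $6$-tuples into the $15$ ``perfectly paired'' families (mimicking the Gaussian Wick pairing / the computation of $\mathbb E[W(\mu_A)^3]$ in the random-wave model), on which the coefficient sum is $\le(\sum|a_\xi|^2)^3=1$ up to the bounded Bessel factors, plus an error set of size $O(N^{6\gamma})$ tuples on which we use the crude bound $(u(N)/N)^3$, contributing $O(u(N)^3 N^{6\gamma-3})=o(1)$. Summing the two parts gives $O(1)$, uniformly in $f$ satisfying $A2$ and in $E$ satisfying the stated $6$-tuple hypothesis, as required.
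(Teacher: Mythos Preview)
Your final paragraph is the correct argument and is exactly what the paper does: split the 6-tuples satisfying the linear relation into the $15$ diagonal (Wick-paired) families, on which the coefficient sum is controlled by $(\sum_\xi |a_\xi|^2)^3=1$, and the off-diagonal tuples, of which there are $O(N^{6\gamma})$ by hypothesis and on which the crude bound $|a_{\xi_i}|\le (u(N)/N)^{1/2}$ gives $O(u(N)^3 N^{6\gamma-3})=o(1)$. The paper bounds the ball Fourier factor trivially by $\pi$ throughout and never appeals to Bessel decay.

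Your middle paragraph, however, contains a wrong turn that you should drop. The suggestion that when $\xi_{2j-1}-\xi_{2j}\neq 0$ one gains from the Bessel factor $O((r|\xi_{2j-1}-\xi_{2j}|)^{-3/2})$ does not work here: since $r=R/\sqrt{E}$ and two lattice points on the circle of radius $\sqrt{E}$ can be as close as $|\xi-\xi'|\asymp 1$ (or even have $|\xi-\xi'|/\sqrt{E}\to 0$), the quantity $r|\xi-\xi'|$ need not be large and may in fact be $o(1)$, so no decay is available. Fortunately this mechanism is unnecessary; the diagonal/off-diagonal split with the trivial bound on the kernel already gives $O(1)$, as you state in the last paragraph.
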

\begin{proof}
Transforming the variables and expanding the integral, we can write 
\begin{align}
	\left(\frac{1}{\pi r^2}\int_{B(x,r)}|f(y)|^2dy\right)^3 &= \left(\frac{1}{\pi}\int_{B(1)} |f(x+ry)|^2dy\right)^3 \nonumber \\
	&=\left( \sum_{\xi,\xi'}a_{\xi}\overline{a_{\xi'}}e(\langle\xi- \xi',x\rangle) \frac{1}{\pi} \int_{B(1)}e(\langle\xi- \xi',ry\rangle)\right)^3 \nonumber \\
	&=\sum_{\substack{\xi_1, \xi_2, \xi_3 \\ \xi_1', \xi_2', \xi_3 '}}a_{\xi_1}\overline{a_{\xi_1'}}a_{\xi_2}\overline{a_{\xi_2'}}a_{\xi_3}\overline{a_{\xi_3'}} e(\langle \xi_1+ \xi_2+ \xi_3-\xi_1'- \xi_2'-\xi_3',x\rangle) \nonumber \\
	&\times \left(\frac{1}{\pi}\int_{B(1)}e(\langle\xi- \xi',ry\rangle)\right)^3. \nonumber
\end{align}
Since $\int_{\mathbb{T}^2}dxe(\langle \xi_1+ \xi_2+ \xi_3-\xi_1'- \xi_2'-\xi_3',x\rangle)= 1$ if $ \xi_1+ \xi_2+ \xi_3-\xi_1'- \xi_2'-\xi_3'=0$ and vanishes otherwise, we have 

\begin{align}
&\sum_{\substack{\xi_1, \xi_2, \xi_3 \\ \xi_1', \xi_2', \xi_3 '}}	\int_{\mathbb{T}^2}dx e(\langle \xi_1+ \xi_2+ \xi_3-\xi_1'- \xi_2'-\xi_3',x\rangle) \nonumber \\
&= |\{\xi_i \in \mathcal{E}(E) : \xi_1+ \xi_2+ \xi_3+\xi_4+ \xi_5+\xi_6=0\}| \nonumber
\end{align}
We call a solution of $\xi_1+ \xi_2+ \xi_3+\xi_4+ \xi_5+\xi_6=0$ diagonal  if it is given by pair-wise cancellation ( like $\xi_i= \xi_i'$ for $i=1,2,3$), we call all the other solutions off-diagonal and split the sum accordingly
\begin{align}
\int_{\mathbb{T}^2}dx \left(\frac{1}{r^2}\int_{B(x,r)}|f(y)|^2dy\right)^3 	= \sum_{\substack{ \xi_1+ \xi_2+ \xi_3-\xi_1'- \xi_2'-\xi_3'=0 \\ \text{diagonal}}}a_{\xi_1}\overline{a_{\xi_1'}}a_{\xi_2}\overline{a_{\xi_2'}}a_{\xi_3}\overline{a_{\xi_3'}} \nonumber \\
 \times\left(\int_{B(1)}e(\langle\xi- \xi',ry\rangle)dy\right)^3
+O\left(\sum_{\substack{ \xi_1+ \xi_2+ \xi_3-\xi_1'- \xi_2'-\xi_3'=0 \\ \text{off-diagonal}}}|a_{\xi_1}\overline{a_{\xi_1'}}a_{\xi_2}\overline{a_{\xi_2'}}a_{\xi_3}\overline{a_{\xi_3'}}|\right) \label{81}
\end{align}
where we have bounded $| \int_{B(1)}dy e(\langle\xi- \xi',ry\rangle)|\leq \pi$. The number of diagonal solutions is $6!/(3!\cdot 2^3)= 15$, so the main term in \eqref{81} is bounded by 
\begin{align}
	15 \pi^3 \sum_{\xi_1, \xi_2, \xi_3}|a_{\xi_1}|^2|a_{\xi_2}|^2|a_{\xi_3}|^2= 15 \left(\sum_{ \xi}|a_{\xi}|^2\right)^3= 15\pi^3 .\label{30}
\end{align}
  Thanks to the assumptions, we have  $|a_{\xi}|\leq \sqrt{u(N)/N}$ for all $\xi \in \mathcal{E}$ and  the number of off-diagonal solutions is bounded by $N^{\gamma 6}$. Thus, 

\begin{align}
	\sum_{\substack{ \xi_1+ \xi_2+ \xi_3-\xi_1'- \xi_2'-\xi_3'=0 \\ \text{off-diagonal}}}|a_{\xi_1}\overline{a_{\xi_1'}}a_{\xi_2}\overline{a_{\xi_2'}}a_{\xi_3}\overline{a_{\xi_3'}}| \leq  (u(N)/N)^3 \cdot N^{\gamma 6}= O(1). \label{31}
\end{align}
 Combining \eqref{81}, \eqref{30} and \eqref{31}, we conclude the lemma. 
\end{proof}

We are finally ready to conclude the proof of Theorem \ref{main corollary}:

\begin{proof}[Proof of Theorem \ref{main corollary}]
	 The first claim of the Theorem  follows from combining Proposition \ref{conv dis} and Proposition \ref{massdistribution}.  By Lemma \ref{third moment}, we deduce that the first two moments are uniformly integrable, therefore 
	\begin{align}
	\int_{\mathbb{T}^2}(M_f(x,r)-1)^2\rightarrow Var\left( \alpha + \beta W(\mu_{A})  \right)= \beta^2Var(W(\mu_A)) \nonumber
	\end{align}
which concludes the proof. 
\end{proof}

\section{Proof of Theorem \ref{main theorem}}
As mentioned in the introduction, in \cite{BU,BW} it was proved that $f(x)$, when considered in a small  neighbourhood of $x\in \mathbb{T}^2$ and averaged over $x$, approximates a Gaussian field. Formally, we fix some large parameter $R>1$ and write $f$ around the point $x$ as 

\begin{align}
	F_x(y)= f\left( x+ \frac{R}{\sqrt{E}}y\right) \nonumber
\end{align}
for $y\in [1/2,1/2]^2$. Then we have the following proposition: 

\begin{prop}
	\label{main prop}
Let $f$ be as in (\ref{function}), suppose that $E$ satisfies assumption $A1$, $N\rightarrow \infty$, and let $\epsilon_1, \epsilon_2>0$  be given. Then there exists some $B_0=B_0(\epsilon_1,\epsilon_2,R )$ and some $E_0= E_0(\epsilon_1,\epsilon_2,R ,B_0)$ such that for all $E>E_0$ and for all $B>B_0$ the following holds: 

\begin{enumerate}
	\item There exists a subspace $\Omega'\subset \Omega$ with $\mathbb{P}[\Omega']>1-\epsilon_2$  and  a measure-preserving function $\tau: \Omega'\rightarrow \mathbb{T}^2$ such that $vol(\mathbb{T}^2\backslash \tau(\Omega'))\leq \epsilon_2$. 
	\item Uniformly for all $f$ satisfying assumption $A2$,  
	\begin{align}
		\underset{y\in[-1/2,1/2]^2}{\sup}|F_{\tau(\omega)}(y)- F^R_{\mu_{f}}(y,w)|\leq \epsilon_1 \nonumber
	\end{align} 
for all $\omega\in \Omega'$, where the covariance of  $F^R_{\mu_{f}}(y)$ is given by 
	\begin{align}
		\mathbb{E}[F^R_{\mu_{f}}(x)F^R_{\mu_{f}}(y)]= \int_{\mathbb{S}^1} e(\langle\lambda, R(x-y)\rangle)d\mu_{f}(\lambda). \nonumber
	\end{align}
	
\end{enumerate}
\end{prop}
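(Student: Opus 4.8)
The plan is to follow Bourgain's de-randomisation strategy as in \cite{BU,BW,SO}, exploiting assumption $A1$ to control high moments of a suitable averaged version of $f$. First I would fix a large integer $B$ and consider, for a parameter $\delta>0$ to be chosen, the translated functions $F_x(y)=f(x+(R/\sqrt E)y)$ on the square $Q=[-1/2,1/2]^2$. The key point is that as $x$ ranges uniformly over $\mathbb{T}^2$, the family $\{F_x\}$ becomes, in a quantitative sense, close in distribution to the Gaussian field $F^R_{\mu_f}$ with the stated covariance $\int_{\mathbb{S}^1}e(\langle\lambda,R(x-y)\rangle)\,d\mu_f(\lambda)$. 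Concretely, I would discretise $Q$ on a fine net $\mathcal{N}$ (mesh depending on $R$ and $\epsilon_1$; the Lipschitz constant of both $F_x$ and $F^R_{\mu_f}$ is $O(R)$ by Bernstein, since frequencies lie in $R\cdot\mathbb{S}^1$, so controlling the net suffices to control the sup-norm up to $\epsilon_1/2$). On the net, $F_x$ evaluated at the points of $\mathcal{N}$ is a vector-valued function of $x$, and its mixed moments of order up to $B$ are
\[
\int_{\mathbb{T}^2}\prod_{j}F_x(y_j)\,dx=\sum_{\substack{\xi_1+\dots+\xi_k=0\\ \xi_i\in\mathcal{E}}}\Big(\prod_i a_{\xi_i}\Big)\prod_i e(\langle \xi_i,Ry_{j(i)}\rangle),
\]
and assumption $A1$ says the number of such relations with $k=2l$ is $\frac{(2l)!}{2^l l!}N^l+O(N^{\gamma l})$, the main term coming from pair cancellations $\xi_i=-\xi_{i'}$. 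Combined with the flatness bound $|a_\xi|^2\le u(N)/N$ from $A2$, the off-diagonal relations contribute $O((u(N)/N)^l N^{\gamma l})=o(1)$ since $\gamma<1/2$ and $u(N)=N^{o(1)}$, while the diagonal term reproduces exactly the Gaussian moment $\frac{(2l)!}{2^l l!}$ times a sum over pairings of $\sum_\xi |a_\xi|^2 e(\langle\xi,R(y_{j(i)}-y_{j(i')})\rangle)=r^R_{\mu_f}(y_{j(i)}-y_{j(i')})$, which is precisely the moment of the Gaussian vector $(F^R_{\mu_f}(y_j))_j$.

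Having matched all moments up to order $B$ with error $o(1)$ uniformly in $f$ satisfying $A2$, I would invoke a quantitative moment-comparison / method-of-moments statement (as in \cite[Section~3]{SO} or the Lindeberg-type argument in \cite{BW}): if two random vectors in $\mathbb{R}^{|\mathcal{N}|}$ have matching moments up to order $B$ and one of them (the Gaussian) has controlled tails, then they can be coupled — or their distributions are close in a Wasserstein-type metric — up to an error that goes to $0$ as $B\to\infty$ (and then $E\to\infty$). This gives a coupling, i.e. a measure-preserving map $\tau$ from a subset $\Omega'\subseteq\Omega$ of measure $>1-\epsilon_2$ to (a full-measure-up-to-$\epsilon_2$ subset of) $\mathbb{T}^2$, under which $\sup_{y\in\mathcal{N}}|F_{\tau(\omega)}(y)-F^R_{\mu_f}(y,\omega)|\le\epsilon_1/2$ off an event of probability $\le\epsilon_2$; here I would absorb that bad event into the definition of $\Omega'$. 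Finally, upgrading from the net $\mathcal{N}$ to all of $Q$ via the Bernstein/Lipschitz bound on both fields (the random field $F^R_{\mu_f}$ is almost surely Lipschitz with constant $O(R)$ with overwhelming probability by Borell--TIS, cf. Lemma \ref{not too big}; $F_{\tau(\omega)}$ has the deterministic bound) closes the gap to the full $\epsilon_1$ in the sup-norm over $Q$.

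The main obstacle is the moment-to-distribution passage: matching $B$ moments of an $|\mathcal{N}|$-dimensional vector does not by itself give closeness in distribution with an effective rate, so I would need to be careful about the interplay of the three parameters — the net size $|\mathcal{N}|$ (hence the dimension) grows with $R$ and $\epsilon_1$, the number of matched moments $B$ must be taken large *after* fixing the net, and $E$ must be taken large *after* $B$ so that the $O((u(N)/N)^l N^{\gamma l})$ error is negligible for all $l\le B$. Getting the quantifier order right ($B_0=B_0(\epsilon_1,\epsilon_2,R)$ then $E_0=E_0(\epsilon_1,\epsilon_2,R,B_0)$) and producing an actual measure-preserving $\tau$ rather than mere distributional closeness is where the real work lies; this is exactly the content of the de-randomisation lemma of Bourgain and Sodin, and I would cite and adapt \cite[Theorem 1 and its proof]{BU} together with \cite[Section 3]{SO} for the quantitative coupling, checking that all constants there depend only on the allowed parameters and that the flatness hypothesis $A2$ enters only through the uniform bound $|a_\xi|^2\le u(N)/N$.
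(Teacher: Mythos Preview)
Your strategy is sound in spirit and differs from the paper's in an interesting way, but there is one concrete gap.

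\textbf{The gap.} You claim that $F_x(\cdot)$ has deterministic Lipschitz constant $O(R)$ on $Q$ ``by Bernstein''. Bernstein's inequality gives $\|\nabla_y F_x\|_{\infty}\le C R\,\|F_x\|_{\infty}$, not $O(R)$; and $\|F_x\|_{\infty}$ can be as large as $\sum_\xi |a_\xi|\le \sqrt{N}$, so the deterministic bound on the Lipschitz constant is $O(R\sqrt N)$, not $O(R)$. With only an $O(R\sqrt N)$ bound, the mesh of $\mathcal N$ would have to shrink with $N$, so $|\mathcal N|$ would grow with $N$, and then the number $B$ of moments needed to control a $|\mathcal N|$-dimensional distribution would also have to grow with $N$, breaking the quantifier order $B_0=B_0(\epsilon_1,\epsilon_2,R)$ you need. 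The fix is to argue probabilistically: for most $x\in\mathbb T^2$ (a set of volume $\ge 1-\epsilon_2/4$, say) one has $\sup_Q|F_x|\le M(\epsilon_2)$, which follows from the moment bounds that $A1$ and $A2$ give for $\int_{\mathbb T^2}|f|^{2l}$; then Bernstein yields Lipschitz constant $O(RM)$ on that good set, and you can absorb the bad set into the exceptional set. This is not hard but it is not the ``deterministic bound'' you asserted.

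\textbf{Comparison with the paper.} The paper proceeds differently and in a way that sidesteps the Lipschitz issue entirely. Rather than discretising the $y$-variable on a net $\mathcal N$, it discretises the \emph{frequency} variable: it partitions $\mathbb S^1$ into $2K$ arcs $I_k$, sets $b_k(x)=\mu_f(I_k)^{-1/2}\sum_{\xi/\sqrt E\in I_k}a_\xi e(\langle\xi,x\rangle)$, and approximates $F_x$ by $\phi_x(y)=\sum_{k\in\mathcal K}\mu_f(I_k)^{1/2}b_k(x)\,e(\langle R\zeta^k,y\rangle)$ (Lemma~\ref{close det}). The CLT/moment argument is then applied to the $|\mathcal K|$-dimensional vector $(b_k(x))_k$ with $|\mathcal K|\le 2K$ fixed independently of $N$, producing a Gaussian field $F^R_{\mu_K}$ with discrete spectral measure $\mu_K$ (Lemma~\ref{close randm}); finally $\mu_K\Rightarrow\mu_f$ as $K\to\infty$ (Lemma~\ref{spectral con}) and Lemma~\ref{Sodin} upgrades $F^R_{\mu_K}$ to $F^R_{\mu_f}$. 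Because $\phi_x$ is a sum of only $2K$ plane waves, its sup-norm and Lipschitz constant over $Q$ are controlled by $|b_k(x)|$ alone, with no hidden dependence on $N$; this is what your net-based route lacks without the additional probabilistic step above. Your direct moment-matching on a net is a legitimate alternative and has the appeal of avoiding the intermediate measure $\mu_K$, but the arc-decomposition buys a cleaner reduction to a genuinely finite-dimensional CLT.
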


Assuming Proposition \ref{main prop}, which we will prove after, we prove the main theorem. 
\begin{proof}[Proof of Theorem \ref{main theorem}]
Let $\epsilon>0$, $t>0$ be fixed and let $X(\mu_{f})= \int_{B(R)}|F_{\mu_{f}}|^2/\pi R^2$. By L\'{e}vy continuity theorem, the statement of the theorem is equivalent to the inequality
	
	\begin{align}
		\left|\int_{\mathbb{T}^2} \exp(it M_f(x,r))dx- \mathbb{E}[\exp(itX(\mu_{f}))]\right|\leq \epsilon \label{12}
	\end{align}
	for all $E$ sufficiently large (depending on $\epsilon,t$ and $R$). First, we apply Proposition \ref{main prop} with $\epsilon_2=\epsilon/8$ and $\epsilon_1=\epsilon_1(\epsilon,t)$ to be chosen later, to get $\tau:\Omega'\rightarrow \mathbb{T}^2$ and $F^R_{\mu_{f}}$, for all $E>E_0$ (and $B>B_0$). Next, we apply Lemma \ref{not too big} with $\delta_1=\epsilon/8$ to obtain some $M=M(\epsilon)$ such that 
	
	\begin{align}
	\mathbb{P}\left(|	F^R_{\mu_{f}}|^2>M\right) \leq \epsilon/8 \nonumber. 
	\end{align}
Choosing $\epsilon_1= \sqrt{\epsilon}/(8Mt)$, we deduce that, outside an event of size at most $\epsilon/4$ and a subset of $\mathbb{T}^2$ of size at most $\epsilon/8$, we have 
	
		\begin{align}
	&\underset{y\in[-1/2,1/2]^2}{\sup}|F_{\tau(\omega)}(y)- F^R_{\mu_{f}}(y)|\leq \epsilon_1 &|	F^R_{\mu_{f}}|^2\leq M. \label{9}
	\end{align}
Since, by transformation  of variables,
	\begin{align}
	X(\mu_{f})= \frac{1}{\pi}\int_{B(1)} |F^R_{\mu_{f}}|^2dx\label{10}
	\end{align}
	arguing as in Lemma \ref{conv dis}, we see that conditions  (\ref{9}) imply
	\begin{align}
		\left|\exp(it M_f(\tau(\omega),r))- \exp\left(\frac{it}{\pi} \int_{B(1)} |F^R_{\mu_{f}}|^2dx  \right)\right|\leq \epsilon/2. \label{11}
	\end{align}
	Hence, combining (\ref{10}), (\ref{11}) and the fact that (\ref{9}) holds outside a set of size at most $\epsilon/4$ and volume $\epsilon/4$, we obtain (\ref{12}) and this concludes the proof. 
\end{proof}
\section{ Proof of Proposition \ref{main prop}: Bourgain's de-randomisation.}
\label{part 1}
The material of this section is contained, in various forms, in \cite{BU,BW}. We present it here for the convenience of the reader and since Proposition \ref{main prop}, as stated, does not appear in the literature. 
\subsection{Approximating $f$ is small squares}
The aim of this section is to approximate the function $f$ (in squares of side $R/\sqrt{E}$) by a more tractable function so we fix some large parameter $R>1$ and recall the notation 

\begin{align}
	F_x(y)= f\left( x+ \frac{R}{\sqrt{E}}y\right)= \sum_{ |\xi|^2=E} a_{\xi}e(\langle\xi,x\rangle)e\left(\left\langle\frac{\xi}{\sqrt{E}},Ry\right\rangle\right). \nonumber
\end{align}
 The points $\{\xi/\sqrt{E}\}$ lie on the unit circle, so as $N\rightarrow \infty$ they will accumulate, we first want to approximate these accumulation points. To this end, we pick some large parameter $K$ and divide the circle into arcs of length $1/2K$
\begin{align}
I_k=\left(\frac{k-1}{2K},\frac{k}{2K}\right] \nonumber
\end{align}
for $-K+1\leq k\leq K$ and let 
\begin{align}
\mathcal{E}^{(k)}=\{\xi\in \mathcal{E}: \xi/\sqrt{E}\in I_k\}.  \nonumber
\end{align}
 Now, we pick some small parameter $0<\delta<1$ and further subdivide  $\{\mathcal{E}^{(k)}\}$ accordingly to the measure $\mu_{f}$ as
 
 \begin{align}
 	&\mathcal{K}=\{-1+K\leq k\leq K: \mu_{f}(I_k)\geq \delta\} & \mathcal{G}=\cup_{k\not \in \mathcal{K}}\mathcal{E}^{(k)}. \nonumber
 \end{align}
 Finally, we approximate the points in $\mathcal{E}^{(k)}$ via the middle point $\zeta^k$ of $I_k$. Thus, we have obtained the functions 
 
 \begin{align}
 	&\tilde{F}_x(y)= \sum_{k\in \mathcal{K}}\sum_{\xi\in \mathcal{E}^{(k)}} a_{\xi}e(\langle\xi,x\rangle)e\left(\left\langle\frac{\xi}{\sqrt{E}}-\zeta^k,Ry\right\rangle\right)e(\zeta^k,Ry) \label{13} \\
 	 & \tilde{\psi}_x(y)=F_x(y)- \tilde{F}_x(y). \nonumber
 \end{align}
 Let us consider inner sum in (\ref{13}). Since $|\xi/\sqrt{E}-\zeta^k|$ can be made arbitrarily small by taking $K$ large, we approximate the whole sum by the term with $y=0$
\begin{align}
b_k(x)=\frac{1}{\mu_{f}(I_k)^{1/2}}\sum_{\xi\in \mathcal{E}^{(k)}}a_{\xi}e(\langle\xi,x\rangle) \label{bk}
\end{align}
obtaining the function
\begin{align}
\phi_x(y)=\sum_{k\in \mathcal{K}}\mu_{f}(I_k)^{1/2}b_k(x)e(\langle R\zeta^{k},y \rangle). \label{phi}
\end{align}
Here, we define $b_k$ and  $\phi_x$ so that $\int_{[-1/2,1/2]^2} |\phi_x(y)|^2 dy=1$ for all $x\in \mathbb{T}^2$. Moreover, to $\phi_x$ we associate the  ``spectral measure"
\begin{align}
	\mu_{K}= \sum_{k\in \mathcal{K}} \sum_{\xi\in \mathcal{E}^{(k)}} \mu_{f}(I_k) \delta_{\zeta^k}\Big/ \sum_{k\in \mathcal{K}} \sum_{\xi\in \mathcal{E}^{(k)}} \mu_{f}(I_k) .\label{muk}
\end{align}
The above approximations are justified by the following lemma \cite[Lemma 1]{BU} and \cite[Proposition 3.2]{BW}.
\begin{lem}
	\label{close det}
	Let $R>1$, $\epsilon_3,\epsilon_4>0$ be given. Then there exists $K_0=K_0(R,\epsilon_1,\epsilon_2)$, $\delta_0=\delta_0(R,K,\epsilon_3,\epsilon_4)$ and a subset $V \subset \mathbb{T}^2$ with $vol(V)\leq \epsilon_4$ such that for all $K>K_0$, $\delta<\delta_0$ and $x\in \mathbb{T}^2\backslash V$ we have 
	\begin{align}
	\underset{y\in[-1/2,1/2]^2}{\sup}	|F_x- \phi_x|\leq \epsilon_3. \nonumber
		\end{align}
\end{lem}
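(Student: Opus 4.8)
The plan is to establish the estimate in three successive approximation steps, each controlled on $[-1/2,1/2]^2$ and outside a small exceptional set in $\mathbb{T}^2$, corresponding exactly to the three functions $\tilde F_x$, $\phi_x$ and the discarded tail built from $\mathcal{G}$. First I would bound the tail contribution: by the normalisation $\sum_\xi |a_\xi|^2 = 1$ and the definition of $\mathcal{K}$, the total $\mu_f$-mass carried by the arcs $I_k$ with $k\notin\mathcal{K}$ is at most $(2K)\cdot\delta$, so $\sum_{\xi\in\mathcal{G}}|a_\xi|^2 \le 2K\delta$. Since $\sup_y |\sum_{\xi\in\mathcal G} a_\xi e(\langle\xi,x\rangle)e(\langle \xi/\sqrt E, Ry\rangle)|$ has $L^2(\mathbb{T}^2,dx)$-norm squared equal to $\sum_{\xi\in\mathcal G}|a_\xi|^2$ by orthogonality of the characters $e(\langle\xi,\cdot\rangle)$ — after first estimating the $y$-supremum trivially by $\sum_{\xi\in\mathcal G}|a_\xi|$ is too lossy, so instead one expands $|\cdot|^2$ and integrates in $x$ — one gets that the $L^2$-mass of this term is $O(K\delta)$, hence by Chebyshev it is $\le \epsilon_3/3$ outside a set $V_1$ of volume $\le \epsilon_4/3$, once $\delta$ is small enough relative to $K$ and $\epsilon_3,\epsilon_4$.

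Second, I would compare $\tilde F_x$ with $\phi_x$: the difference comes from replacing $e(\langle \xi/\sqrt E - \zeta^k, Ry\rangle)$ by $1$ inside each arc. On $I_k$ we have $|\xi/\sqrt E - \zeta^k|\le 1/(2K)$, and $y\in[-1/2,1/2]^2$, so $|e(\langle \xi/\sqrt E - \zeta^k, Ry\rangle) - 1| \ll R/K$ uniformly. Summing, the error is bounded by $(R/K)\sum_{k\in\mathcal K}\mu_f(I_k)^{1/2}|b_k(x)|$ up to the $e(\langle R\zeta^k,y\rangle)$ phases, and one controls $\sum_k \mu_f(I_k)^{1/2}|b_k(x)|$ in $L^2(\mathbb{T}^2,dx)$ by Cauchy--Schwarz and $\sum_k \mu_f(I_k)=1$, $\mathbb{E}_x|b_k(x)|^2=1$, giving an $L^2$-bound of size $O(R\sqrt{|\mathcal K|}/K) = O(R\sqrt{2K}/K)=O(R/\sqrt K)$; again Chebyshev puts this below $\epsilon_3/3$ outside a set $V_2$ of volume $\le\epsilon_4/3$, for $K$ large. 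Third, the passage from $F_x$ to $\tilde F_x$ is exactly the same computation as the tail step applied arc-by-arc with the phase $e(\langle\xi,x\rangle)$ intact and a further smoothing of the $y$-dependence within arcs in $\mathcal K$; one splits into ``mass outside $\cup_{k\in\mathcal K}\mathcal E^{(k)}$'' (already handled) plus the within-arc $y$-variation (handled as in step two), so no new idea is needed.

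The main obstacle is the bookkeeping of the parameter hierarchy $K \gg_{R,\epsilon_3,\epsilon_4} 1$ then $\delta \ll_{R,K,\epsilon_3,\epsilon_4} 1$: one must make sure each approximation error is genuinely small once $K$ is large enough and then $\delta$ small enough, without circular dependencies, and that the exceptional sets $V=V_1\cup V_2\cup V_3$ have total volume $\le\epsilon_4$. The other subtlety is that one cannot bound $\sup_y$ of a trigonometric sum naively by the sum of moduli of coefficients (that would lose a factor $N$); the point throughout is that one is measuring everything in $L^2(\mathbb{T}^2, dx)$ first, using orthogonality of the exponentials $e(\langle\xi,x\rangle)$ over $\xi\in\mathcal E$, and only then converting to a pointwise-in-$x$ statement via Chebyshev's inequality — so all the sup-in-$y$ estimates must be arranged to survive this, which they do because the $y$-dependent factors have modulus $\le 1$ (or are small, as in step two) uniformly in $y$. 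I will also invoke Lemma \ref{close det}'s cited sources \cite[Lemma 1]{BU}, \cite[Proposition 3.2]{BW} for the details that are standard, recording here only the structure of the argument.
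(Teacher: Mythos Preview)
The paper does not actually prove this lemma; it merely records it as \cite[Lemma~1]{BU} and \cite[Proposition~3.2]{BW}. Your two–step architecture (discard the tail over $\mathcal G$, then replace $e(\langle \xi/\sqrt E-\zeta^k,Ry\rangle)$ by $1$ inside each retained arc) and your parameter hierarchy match those references. Note incidentally that your ``step three'' is empty: since $e(\langle\xi/\sqrt E-\zeta^k,Ry\rangle)e(\langle\zeta^k,Ry\rangle)=e(\langle\xi/\sqrt E,Ry\rangle)$, the function $\tilde F_x$ is literally the restriction of $F_x$ to frequencies in $\bigcup_{k\in\mathcal K}\mathcal E^{(k)}$, so $F_x-\tilde F_x=\tilde\psi_x$ is already your step one.

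There is, however, a genuine gap in how you pass to $\sup_y$. In step one you claim that $\sup_y|\tilde\psi_x(y)|$ has $L^2(\mathbb T^2,dx)$–norm squared equal to $\sum_{\xi\in\mathcal G}|a_\xi|^2$; orthogonality gives this only for each \emph{fixed} $y$, and the observation that ``the $y$-dependent factors have modulus $\le 1$'' does not let you exchange $\sup_y$ with $\int_{\mathbb T^2}dx$. The same problem surfaces in step two, where your pointwise bound
\[
|\tilde F_x(y)-\phi_x(y)|\le (R/K)\sum_{k\in\mathcal K}\mu_f(I_k)^{1/2}|b_k(x)|
\]
is false: the factor $e(\langle\xi/\sqrt E-\zeta^k,Ry\rangle)-1$ depends on $\xi$ within the arc and cannot be pulled outside the inner sum to leave $b_k(x)$. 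What actually works (and what the cited sources do) is to use that all the functions in play are, as functions of $y$, band-limited with frequencies in $B(R)$: one bounds $\int_{\mathbb T^2}\int_{[-1/2,1/2]^2}|\partial_y^{\alpha}g_x(y)|^2\,dy\,dx$ for $|\alpha|\le 2$ (each $y$-derivative costs only a factor of $R$, so these integrals are still $O_R(K\delta)$ for the tail and $O_R(K^{-2})$ for the arc approximation), and then upgrades from $L^2_y$ to $\sup_y$ via the Sobolev embedding $H^2([-1/2,1/2]^2)\hookrightarrow C^0$ before applying Chebyshev in $x$. With this correction inserted, your outline goes through.
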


\subsection{Passage to random fields}
\label{passage to random fields}
The next step in the proof of Proposition \ref{main prop} is to show that the  $b_k$'s, in (\ref{bk}), simultaneously approximate $K$ i.i.d. complex Gaussian random variables. We briefly sketch the argument in order to highlight the importance of the assumptions $A1$ and $A2$. To show that the $b_k$'s approximate Gaussian random variables, via the central limit theorem, it is enough to prove that the pseudo-random variables $a_{\xi}e(\langle\xi,x\rangle)$ are asymptotically independent, when averaged over $x\in \mathbb{T}^2$, and satisfy some appropriate Lindeberg's condition. The  assumption A2 assures that the  Lindeberg's condition is satisfied. Thus, we are left to show  asymptotic independence. This can be done computing the moments of the pseudo-random vector $(e(\langle\xi,x\rangle))_{\xi \in \mathcal{E}}$ as
\begin{align}
\int_{\mathbb{T}^2}\left|\sum_{ \xi} e(\langle\xi,x\rangle)\right|^{2l} dx= \sum_{\xi_1,...\xi_{2k}}\int_{\mathbb{T}^2}e(\xi_1-\xi_2+...-\xi_{2l})dx.  \label{2}
\end{align}
By orthogonality, the right hand side of (\ref{2}) counts the number of $2l$-tubles $(\xi_1,...,\xi_{2l})$ such that  $\xi_1-\xi_2+...-\xi_{2k}=0$. Tuples of the form $\xi_1=\xi_2,...,\xi_{2k-1}=\xi_{2k}$ contribute to the integral, and their contribution is $2l!\cdot N^{l}/ {2^l l!}$. Moreover, by the assumption A1, all other contributions have lower order as $N\rightarrow \infty$. Thus, via the method of moments, the  pseudo-random variables $\{e(\langle\xi,x\rangle)\}_{\xi}$ are asymptotically independent. The central limit theorem then implies that the  $b_k$'s are simultaneously Gaussian. Therefore, $\phi_x$ in (\ref{phi}), when averaged over $x\in \mathbb{T}^2$,  approximates a Gaussian random field with spectral measure $\mu_{K}$ in \eqref{muk}. Quantifying the above discussion,  we have the following lemma  \cite[Lemma 2]{BU} and \cite[Proposition 3.3]{BW}.

\begin{lem}
	\label{close randm}
	Let $R,K>1$, $0<\delta<1$ be as above and $\epsilon_5, \epsilon_6>0$ be given, moreover  suppose that E satisfies assumption $A1$ and $N\rightarrow \infty$.  Then there exists  $B_0=B_0(\epsilon_5,\epsilon_6,R )$ and  $E_0= E_0(\epsilon_5,\epsilon_6,R ,B_0)$ such that for all $E>E_0$ and for all $B>B_0$ the following holds: 
	\begin{enumerate}
		\item There exists a subspace $\Omega''\subset \Omega$ with $\mathbb{P}[\Omega'']>1-\epsilon_6$  and  a measure-preserving function $\tau: \Omega''\rightarrow \mathbb{T}^2$ such that $vol(\mathbb{T}^2\backslash \tau(\Omega''))\leq \epsilon_6$. 
		\item Uniformly for all $f$ satisfying assumption $A2$, 
		\begin{align}
		\underset{y\in[-1/2,1/2]^2}{\sup}|\phi_{\tau(\omega)}(y)- F^R_{\mu_{K}}(y,\omega)|\leq \epsilon_5 \nonumber
		\end{align} 
		for all $\omega\in \Omega''$, where the covariance of  $F^R_{\mu_{K}}(y)$ is given by 
		\begin{align}
		\mathbb{E}[F^R_{\mu_{K}}(y)F^R_{\mu_{K}}(x)]= \int_{\mathbb{S}^1} e(\langle\lambda, R(x-y)\rangle)d\mu_{K}(\lambda). \nonumber
		\end{align}
	\end{enumerate}
\end{lem}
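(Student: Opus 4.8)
The plan is to follow Bourgain's de-randomisation scheme and isolate the only genuinely random input, which is the coefficient vector $(b_k(x))_{k\in\mathcal{K}}$ from \eqref{bk} with $x$ drawn uniformly from $\mathbb{T}^2$: both $\phi_x$ in \eqref{phi} and $F^R_{\mu_K}$ are one fixed trigonometric polynomial, with the finite (once $K$ and $\delta$ are fixed) frequency set $\{R\zeta^k\}_{k\in\mathcal{K}}$, applied to this vector, respectively to a Gaussian vector. So I would first prove that, as $N\to\infty$, the random vector $(b_k(x))_{k\in\mathcal{K}}$ converges in distribution to a centred complex Gaussian vector whose covariance is the one forced by $\mu_K$: each $b_k$ has unit variance (since $\sum_{\xi\in\mathcal{E}^{(k)}}|a_\xi|^2=\mu_f(I_k)$), distinct $b_k,b_{k'}$ are uncorrelated because the $\mathcal{E}^{(k)}$ are disjoint, and the only relation is $\overline{b_k}=b_{k^\ast}$ on the antipodal arc $I_{k^\ast}=-I_k$, which is precisely \eqref{simmetry} and matches the reality of $F^R_{\mu_K}$. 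Then I would upgrade this to the uniform-in-$y$ estimate and, finally, realise it through a measure-preserving $\tau$.

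For the convergence in distribution I would use the method of moments (equivalently a Lindeberg-type central limit theorem; assumption A2 is exactly what yields Lindeberg's condition, since $\max_\xi|a_\xi|^2\le u(N)/N$ while $\mu_f(I_k)\ge\delta$ for $k\in\mathcal{K}$). Concretely, for any word in the $b_k$'s and $\overline{b_k}$'s of length $2l\le B$, expanding via \eqref{bk} and integrating over $x\in\mathbb{T}^2$ turns the mixed moment, by orthogonality of characters, into a weighted count of $2l$-tuples $(\xi_1,\dots,\xi_{2l})\in\mathcal{E}^{2l}$ with the appropriate signed sum equal to $0$. The diagonal tuples (pairwise cancellation, each $\xi$ matched with $-\xi$ in the conjugate slot) reproduce exactly the Gaussian (Wick) moment, with total weight $1$ up to the combinatorial factor, since the $\mu_f(I_{k_j})$ normalisations cancel; the off-diagonal tuples number at most $O(N^{\gamma l})$ by assumption A1 and each contributes at most $(u(N)/N)^l\delta^{-l}$, so their total is $O\!\left(N^{(\gamma-1)l+o(1)}\right)\to 0$ because $\gamma<1/2$ and $u(N)=N^{o(1)}$. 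All of this is uniform over coefficient sequences $(a_\xi)$ obeying A2, which is the source of the uniformity asserted in the Lemma; taking $B$ large (depending only on $\epsilon_5,\epsilon_6$) makes the moment match good enough, and this fixes $B_0$ and then $E_0$.

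To pass from this $\le 2K$-dimensional statement to the supremum over $y\in[-1/2,1/2]^2$, I would observe that $\phi_x$ and $F^R_{\mu_K}$ are trigonometric polynomials supported on the finite frequency set $\{R\zeta^k\}$, so by Bernstein's inequality their gradients are $O(R)$ times their sup-norms, whence a net in $[-1/2,1/2]^2$ of mesh $\asymp\epsilon_5/R$ (with $O((R/\epsilon_5)^2)$ points) controls the sup-norm difference. On such a net, $(\phi_x(y))_y$ and $(F^R_{\mu_K}(y))_y$ are fixed linear images of $(b_k(x))_k$ and of the Gaussian vector, so the previous step gives their closeness in distribution, say in Lévy--Prokhorov distance on $\mathbb{C}^{\mathrm{net}}$. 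Strassen's theorem (a quantitative Skorokhod coupling) then produces, on the common space $\Omega$, a uniform variable $x(\cdot)$ together with the Gaussian field $F^R_{\mu_K}$ such that, outside an event of probability $\le\epsilon_6$, the two net-vectors are within a prescribed tolerance and hence, by the Bernstein step, $\sup_{y}|\phi_{x(\omega)}(y)-F^R_{\mu_K}(y,\omega)|\le\epsilon_5$. Taking $\Omega''$ to be this good event and $\tau(\omega)=x(\omega)$ gives a measure-preserving map with $\mathrm{vol}(\mathbb{T}^2\setminus\tau(\Omega''))\le\mathbb{P}(\Omega\setminus\Omega'')\le\epsilon_6$, since the pushforward of $\mathbb{P}$ under $x$ is Lebesgue measure on $\mathbb{T}^2$.

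The main obstacle is the quantitative moment estimate: one must track the $N$-dependence sharply enough that, once $K$ and $\delta$ are frozen (so that $|\mathcal{K}|\le 2K$ and $\mu_f(I_k)\ge\delta$), the off-diagonal contributions are shown negligible \emph{uniformly} in the admissible $(a_\xi)$, with the number of moments $B$ needed depending only on $\epsilon_5,\epsilon_6,R$; here assumptions A1 and A2 are used in an essential, non-removable way. A secondary technical point is extracting a genuine measure-preserving $\tau$ onto a set of nearly full volume, rather than just an abstract coupling, which the Strassen/Skorokhod construction handles together with the observation that the law of $x$ is Lebesgue on $\mathbb{T}^2$.
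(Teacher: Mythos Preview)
Your proposal is correct and follows essentially the same route as the paper: the paper does not give a self-contained proof of this lemma but only sketches the argument (method of moments for the vector $(b_k(x))_k$, with A2 supplying the Lindeberg-type smallness and A1 killing the off-diagonal spectral correlations) and then defers to \cite[Lemma~2]{BU} and \cite[Proposition~3.3]{BW}. Your added details---the Bernstein/net reduction to finitely many $y$'s and the Strassen/Skorokhod coupling to produce the measure-preserving $\tau$---are exactly the standard way those references make the sketch rigorous, so there is no substantive divergence.
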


To conclude the proof of Proposition \ref{main prop}, it is enough to show that $F^R_{\mu_{K}}$ is close to $F^R_{\mu_{f}}$. This is the content of the following lemma: 

\begin{lem}
	\label{spectral con}
	Let $\mu_{K}$ be given as in (\ref{muk}). Then, 	as $K\rightarrow \infty$, we have 
	\begin{align}
		\mu_{K} \Rightarrow \mu_{f}. \nonumber
	\end{align}
\end{lem}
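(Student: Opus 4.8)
The plan is to prove the weak$^\star$ convergence $\mu_K \Rightarrow \mu_f$ by testing against a fixed continuous function $g \in C(\mathbb{S}^1)$ and showing $\int g\, d\mu_K \to \int g\, d\mu_f$ as $K \to \infty$. Since both $\mu_K$ and $\mu_f$ are probability measures on the compact space $\mathbb{S}^1$ (the normalisation in \eqref{muk} was chosen precisely so that $\mu_K$ has total mass one, and one should first note that the denominator $\sum_{k \in \mathcal{K}} \sum_{\xi \in \mathcal{E}^{(k)}} \mu_f(I_k)$ is just $\mu_f(\cup_{k \in \mathcal{K}} I_k) = 1 - \mu_f(\mathcal{G}) \geq 1 - 2K\delta \to 1$ as $\delta \to 0$), the usual Portmanteau-type criterion applies. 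The key point is that $\mu_K$ is obtained from $\mu_f$ by two operations: first discarding the ``light'' arcs indexed by $k \notin \mathcal{K}$, whose total $\mu_f$-mass is at most $2K\delta$, and second transporting each remaining arc $I_k$ (of length $1/2K$) to its midpoint $\zeta^k$.

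First I would control the transport error. By uniform continuity of $g$ on $\mathbb{S}^1$, given $\eta > 0$ there is $K$ large enough that $|g(\theta) - g(\zeta^k)| \leq \eta$ whenever $\theta \in I_k$, since $\mathrm{diam}(I_k) = 1/2K \to 0$. Hence, writing $\mathbb{1}_{\mathcal{K}}$ for the union of good arcs,
\begin{align}
\left| \int g\, d\mu_K - \frac{1}{\mu_f(\cup_{k\in\mathcal{K}} I_k)}\int_{\cup_{k\in\mathcal{K}} I_k} g\, d\mu_f \right| = \left| \frac{1}{\mu_f(\cup_{k\in\mathcal{K}} I_k)}\sum_{k\in\mathcal{K}} \int_{I_k} (g(\zeta^k) - g(\theta))\, d\mu_f(\theta)\right| \leq \eta. \nonumber
\end{align}
Then I would control the truncation error: the difference between the normalised restriction of $\mu_f$ to $\cup_{k\in\mathcal{K}} I_k$ and $\mu_f$ itself is bounded, when tested against $g$, by $\|g\|_\infty$ times (roughly) twice the discarded mass $\mu_f(\mathcal{G}) \leq 2K\delta$ plus the renormalisation defect; since $\delta = \delta(R,K,\dots)$ may be chosen after $K$ and arbitrarily small, this is $\leq \eta$ as well. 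Combining the two estimates gives $|\int g\, d\mu_K - \int g\, d\mu_f| \leq 2\eta$ for all $K$ large and $\delta$ correspondingly small, which is exactly $\mu_K \Rightarrow \mu_f$.

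I expect there is essentially no serious obstacle here; the lemma is soft and follows from uniform continuity plus the bookkeeping of masses. The only mild subtlety is the interplay of the two parameters: $K$ must be taken large first (to make the arcs small for the transport estimate and to make $g$ nearly constant on each arc), and then $\delta$ small depending on $K$ (to make $\mathcal{G}$, and hence the discarded mass $\leq 2K\delta$, negligible) — this is consistent with the order of quantifiers in Lemma \ref{close det} and Lemma \ref{close randm}, so the nested limits in Proposition \ref{main prop} go through. One should also remark that, combined with Lemma \ref{Sodin} applied to the deterministic-covariance fields $F^R_{\mu_K}$ and $F^R_{\mu_f}$ (note these have spectral measures $\mu_K$ and $\mu_f$ pushed forward under $\lambda \mapsto R\lambda$, and $\mu_K \Rightarrow \mu_f$ implies the pushforwards converge weakly too), this yields $\sup_{B(R)}|F^R_{\mu_K} - F^R_{\mu_f}| \to 0$ outside a small-probability event, which is what is needed to chain Lemma \ref{close randm} into Proposition \ref{main prop}.
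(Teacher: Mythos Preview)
Your proposal is correct and follows essentially the same route as the paper: both arguments isolate the same two error sources --- the mass $\mu_f(\mathcal{G})\leq 2K\delta$ discarded from the light arcs, and the $O(1/K)$ displacement from each $I_k$ to its midpoint $\zeta^k$ --- and both exploit that $\delta$ is chosen after $K$ (the paper simply takes $\delta<K^{-2}$). The only cosmetic difference is that the paper tests $\mu_K$ against open sets $A\subset\mathbb{S}^1$ and shows $\mu_K(A)=\mu_f(A)+O(1/K)$, whereas you test against $g\in C(\mathbb{S}^1)$ and use uniform continuity; your formulation is the more standard one for weak$^\star$ convergence and sidesteps any boundary-of-$A$ bookkeeping.
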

\begin{proof}
	Let $A\subset \mathbb{S}^1$ be an open subset then  
	\begin{align}
	\mu_{f}(A)=\sum_{\xi \in A}|a_{\xi}|^2. \nonumber
	\end{align}
	On the other hand
	\begin{align}
	\mu_K(A)&=\sum_{\substack{\zeta^{(k)}\in A \\ k\in \mathcal{K}}}\mu_{f}(I_k)\Big/\sum_{k\in \mathcal{K}}\mu_{f}(I_k)
	= \sum_{\substack{\zeta^{(k)}\in A \\ k\in \mathcal{K}}} \sum_{\xi \in I_k}|a_{\xi}|^2\Big/\sum_{k\in \mathcal{K}} \mu_{f}(I_k) .\nonumber
	\end{align}
	By definition of $\mathcal{K}$, we have
	\begin{align}
	&\sum_{k\in \mathcal{K}} \mu_{f}(I_k)=1 -\sum_{k\not\in \mathcal{K}}\mu_{f}(I_k)=1+O(\delta K) \nonumber 
	\end{align}
	and similarly 
	\begin{align}
	\sum_{\substack{\zeta^{(k)}\in A \\ k\in \mathcal{K}}} \sum_{\xi \in I_k}|a_{\xi}|^2= \sum_{\xi \in A}|a_{\xi}|^2 +O(\delta K). \nonumber
	\end{align} 
	Hence, taking $\delta<K^{-2}$,   we obtain
	\begin{align}
	\mu_K(A)=\mu_{f}(A) + O\left(\frac{1}{K}\right) \nonumber
	\end{align}  
	as required. 
\end{proof}
We are now ready to complete the proof of Proposition \ref{main prop}: 
\begin{proof}[Proof of Proposition \ref{main prop}]
	Let $\epsilon_1,\epsilon_2$ be given, then we apply Lemma \ref{Sodin} together with Lemma \ref{spectral con}  with $\alpha=\epsilon_1/4$ and $\delta_2= \epsilon_2/4$ to see that 
	\begin{align}
			\underset{y\in[-1/2,1/2]^2}{\sup}|F^R_{\mu_{f}}(y)- F^R_{\mu_{K}}(y)|\leq \epsilon_1/4 \label{ 13}
	\end{align}
	for all $K>K_0$ outside an event $\Omega''''$ of probability at most $ \epsilon_2/8$. Now, apply Lemma \ref{close randm} with  $\epsilon_5= \epsilon_1/4$ and $\epsilon_6= \epsilon_2/4$ to get  $\Omega'''$ and $\tau$,  taking $E>E_0$ and $B>B_0$. Define $\Omega''= \Omega''' \backslash \Omega''''$, since $\mathbb{P}[\Omega'''']\leq \epsilon_2/4$ and $\mathbb{P}[\Omega''']>1-\epsilon_2/4$,  $\mathbb{P}[\Omega'']>1-\epsilon_1/2$. So, by Lemma  \ref{close det} and the triangular inequality, we have
	
	\begin{align}
\underset{y\in[-1/2,1/2]^2}{\sup}|\phi_{\tau(\omega)}(y)- F^R_{\mu_{f}}(y,\omega)|\leq \epsilon_1/2 \label{14}
	\end{align} 
	for all $\omega\in \Omega''$. Now let $\epsilon_3=\epsilon_1/2$ and $\epsilon_4=\epsilon_2/4$ in Lemma \ref{close det} to obtain a set $V$ of volume at most $\epsilon_2/4$ such that 
		\begin{align}
	|F_x- \phi_x|\leq \epsilon_1/2 \label{15}
	\end{align}
	for all $x\in \mathbb{T}^2\backslash V$. Since $\tau$ is measure-preserving, we have $\mathbb{P}[\tau^{-1}(V)]\leq \epsilon_2/2$ so we finally take $\Omega'
	= \Omega'' \backslash \tau^{-1}(V)$, note that $\mathbb{P}[\Omega']>1-\epsilon_2$. For all $\omega \in \Omega'$, in light of (\ref{14}) and (\ref{15}), we have 
	\begin{align}
		\underset{y\in[-1/2,1/2]^2}{\sup}|F_{\tau(\omega)}(y)- F^R_{\mu_{f}}(y,\omega)|\leq \epsilon_1 \nonumber
	\end{align}
	as required. 
\end{proof}

\section*{Acknowledgement}
The author would like to thank Igor Wigman for pointing out the question considered here and for the many discussions as well as Nadav Yesha for useful conversations about his work. This work was supported by the Engineering and Physical Sciences Research Council [EP/L015234/1].  
The EPSRC Centre for Doctoral Training in Geometry and Number Theory (The London School of Geometry and Number Theory), University College London.

\end{document}